\numberwithin{equation}{section}
\newtheorem{thm}{Theorem}[section]
\newtheorem{cor}[thm]{Corollary}
\newtheorem{exam}[thm]{Example}
\newtheorem{rem}[thm]{Remark}
\newtheorem{lem}[thm]{Lemma}
\newtheorem{assum}[thm]{Assumption}
\begin{document}
\title{Remarks on the limiting behaviors of \\ generalized elephant random walks}
\author{Yuichi Shiozawa\thanks{Department of Mathematics,
Graduate School of Science,
Osaka University, Toyonaka, Osaka, 560-0043,
Japan; \texttt{shiozawa@math.sci.osaka-u.ac.jp}}}
\maketitle

\begin{abstract}
We study the limiting behaviors of a generalized elephant random walk on the integer lattice. 
This random walk is defined by using two sequences of parameters expressing 
the memory at each step from the whole past  
and the drift of each step to the right, respectively. 
This model is also regarded as a dependent Bernoulli process. 
Our results reveal how the scaling factors are determined by 
the behaviors of the parameters.   
In particular, we allow the degeneracy of the parameters.  
We further present several examples in which 
the scaling factors are explicitly computed. 
\end{abstract}

\section{Introduction}
We are concerned with the limiting behaviors of 
a generalized elephant random walk on the integer lattice. 
This random walk is defined by using the two sequences of parameters expressing 
the memory at each step from the whole past 
and the drift of each step to the right, respectively.  
Our purpose in this paper is to reveal  how the behaviors of these two sequences  
would determine the scaling factors of the random walk under consideration.

Drezner and Farnum \cite{DF93} introduced a generalized binomial distribution 
associated with the correlated Bernoulli sequence, 
and calculated the moments and variance. 
Heyde \cite[Theorem 1]{H04} utilized the martingale theory (see, e.g., \cite{HH80}) 
to show the existence of the phase transition on the Gaussian fluctuation around the mean 
in terms of the correlation parameter. 
The approach of Heyde \cite{H04} is further developed and applied to  
the proofs of several limit theorems 
for more general dependent Bernoulli sequences 
(see, e.g., \cite{JJQ08,MMY20,WQY12,ZZ15}). 
As we see from the previous works above, if the Bernoulli sequence does not degenerate, 
then the almost sure linear scaling limit exists for the associated process.

On the other hand, Sch\"utz and Trimper \cite{ST04} introduced 
the model of an elephant random walks in which 
the law of each step is given by the steps until just before 
with the independent random drift.
They observed the existence of the phase transition on the diffusivity 
by revealing the asymptotic behavior of the second moment. 
Coletti, Gava and Sch\"utz \cite{CGS17} and Bercu \cite{B18} 
also applied the martingale theory to establish the limit theorems as Heyde \cite{H04}. 
For the superdiffusive elephant random walk with bias,  
Kubota and Takei \cite{KT19} showed that if the bias decays, 
then the $L^2$-scaling limit exhibits the phase transition, 
together with the explicit scaling factor.
When the variance does not degenerate, 
they also revealed the Gaussian fluctuations around both the mean and the random drift. 
As pointed in \cite{KT19}, a class of  elephant random walks corresponds 
to the dependent Bernoulli sequences by simple relations. 
For the recent development on elephant random walks, see, e.g., 
\cite{BL21,B22+,FHM21,HOT22+}. 

Motivated by \cite{KT19}, we have two preliminary questions as follows:
\begin{enumerate}
\item[(i)] Almost sure scaling limit  of the elephant random walk with decaying bias;
\item[(ii)] Gaussian fluctuations of the elephant random walk with degenerate variance.
\end{enumerate}
In this paper, we address these two questions for 
generalized elephant random walks. 
More precisely, for (i), we establish the almost sure scaling limit in Corollary \ref{cor:lln-0}. 
As we see from examples in Section \ref{sect:lln}, 
the scaling factor is not necessarily linear and explicitly affected 
by the convergence rates of the sequences of the parameters. 
We further see that the $L^2$-scaling limit of \cite[Theorem 4]{KT19} remains true 
as the almost sure convergence (Remark \ref{rem:as-conv}). 
For (ii), we find suitable scaling factors for the Gaussian fluctuations 
by taking into consideration the degeneracy of the variance 
in Theorems \ref{thm:quad-var} and \ref{thm:fluct}.  
As its application, we have the scaling limit result 
interpolating Corollary \ref{cor:lln-0} and Theorem \ref{thm:conv-limit} (Corollary \ref{cor:conv-limit-1}).  

Our approach here is based on the martingale theory as in the previous works 
(\cite{B18,CGS17,H04,JJQ08,KT19,MMY20,WQY12,ZZ15}). 
For (i), 
Corollary \ref{cor:lln-0} follows from the law of the large numbers (Theorem \ref{thm:lln-0}). 
To prove Theorem \ref{thm:lln-0}, 
we calculate the quadratic variation of some martingale associated with  
the scaling factor. 
This approach is a simple modification of 
\cite[Theorem 1]{CGS17}, \cite[Theorem 2.1]{JJQ08}, \cite[Theorem 1]{KT19} 
and \cite[Theorem 2]{WQY12}.
Here we would like to mention that, as far as the author knows, 
no other results are available about the almost sure scaling limit except for the linear scaling.
For (ii), we show the Gaussian fluctuations (Theorems \ref{thm:quad-var} (1) and \ref{thm:fluct}) 
by getting the asymptotic behavior of the quadratic variation 
of a square integrable martingale $\{M_n\}_{n=1}^{\infty}$ 
(Lemma \ref{lem:moment} (3) and \eqref{eq:quad-asymp}).  
To do so, we obtain the exact decay rate for the linear scaling limit 
as an application of the law of the large numbers 
(Lemma \ref{lem:lln-order} (2)). 

Related to our preliminary questions (i) and (ii), 
we would like to mention that 
Konno \cite{K09} clarified a structural similarity  
between correlated random walks and quantum walks. 
Using this similarity, he further calculated the characteristic function 
and proved the convergence in distribution for the correlated random walk 
even if the correlation parameter degenerates. 

The rest of this paper is organized as follows:
In Section \ref{sect:pre}, we first introduce 
a model of generalized elephant random walks.  
We then present the moment formula and elementary calculations 
which are necessary for the subsequent sections. 
In Section \ref{sect:lln}, we first prove the law of the large numbers, 
almost sure and also $L^2$-scaling limits of the random walks. 
We then compute the scaling factors in examples.  
In Section \ref{sect:gauss}, we establish the Gaussian fluctuations 
around the mean and the random drift by proving the central limit theorems 
and the laws of the iterated logarithm. 
In connection with Section \ref{sect:lln}, 
we also discuss again the scaling limit problem. 
We then compute scaling factors related to the Gaussian fluctuations. 
Appendix \ref{sect:append} includes the calculations of series and 
the computations of the scaling factors 
in the examples of Sections \ref{sect:lln} and \ref{sect:gauss}.

\section{Preliminaries}\label{sect:pre}
We first introduce a model of generalized elephant random walks.  
Let $q\in [0,1]$, and let $\{\alpha_n\}_{n=1}^{\infty}$ and $\{\varepsilon_n\}_{n=1}^{\infty}$ 
be $[0,1]$-valued sequences. 
Let $\{X_n\}_{n=1}^{\infty}$ be a sequence of $\{-1,1\}$-valued random variables 
defined on a probability space $(\Omega,{\cal F}, P)$ 
such that 
$$P(X_1=1)=q, \quad P(X_1=-1)=1-q$$ 
and 
\begin{equation}\label{eq:dist}
P(X_{n+1}=\pm 1\mid X_1,\dots,X_n)
=\alpha_n\frac{\sharp\{i=1,\dots, n \mid X_i=\pm 1\}}{n}
+(1-\alpha_n)\frac{1\pm \varepsilon_n}{2} \quad (n\geq 1).
\end{equation}
We then define $S_0=0$ and $S_n=X_1+\dots+X_n \ (n\geq 1)$. 
We call $\{S_n\}_{n=0}^{\infty}$ a {\it generalized elephant random walk} 
with correlation parameters $\{\alpha_n\}_{n=1}^{\infty}$ and drift parameters $\{\varepsilon_n\}_{n=1}^{\infty}$.
By \eqref{eq:dist}, 
the $(n+1)$st step $X_{n+1}$ of $\{S_n\}_{n=0}^{\infty}$ 
follows a uniformly chosen past step among $X_1,\dots, X_n$ with probability $\alpha_n$, 
and the random walk with drift $\varepsilon_n$ with probability $1-\alpha_n$.   
If $\alpha_n=\alpha \ (n\geq 1)$ for some $\alpha\in [0,1]$, 
then $\{S_n\}_{n=0}^{\infty}$ is the so-called elephant random walk with bias (\cite{B18,CGS17,KT19,ST04}).

Let $X_n'=(1+X_n)/2$. If we take $\varepsilon_n=2q-1 \ (n\geq 1)$, 
then $\{X_n'\}_{n=1}^{\infty}$ is a sequence of the correlated Bernoulli random variables 
introduce  by Drezner and Farnum \cite[Section 3]{DF93}. 
Under the current setting, 
$\{X_n'\}_{n=1}^{\infty}$ forms the dependent Bernoulli sequence 
studied by \cite{JJQ08, WQY12}. 
In particular, if we let $S_0'=0$ and $S_n'=X_1'+\dots+X_n' \ (n\geq 1)$, 
then $\{S_n\}_{n=0}^{\infty}$ corresponds to 
the dependent Bernoulli process $\{S_n'\}_{n=0}^{\infty}$ by the relation $S_n=2S_n'-n$.

Let $a_1=1$ and $b_1=1$, and let  
$$a_n=\prod_{k=1}^{n-1}\left(1+\frac{\alpha_k}{k}\right), 
\quad b_n=\prod_{k=1}^{n-1}\left(1+\frac{2\alpha_k}{k}\right) \quad (n\geq 2).$$
For $n\geq 1$, we define ${\cal F}_n=\sigma(X_1,\dots, X_n)$ 
and 
$M_n=(S_n-E[S_n])/a_n$.
The next lemma can be proved in a way similar to \cite[Section 3]{KT19}.
\begin{lem}\label{lem:moment}
\begin{enumerate}
\item[{\rm (1)}]
For any $n\geq 1$,
$$
E[X_{n+1}\mid {\cal F}_n]=\frac{\alpha_n}{n}S_n+(1-\alpha_n)\varepsilon_n
$$
and 
$$
E[S_{n+1}\mid {\cal F}_n]=\left(1+\frac{\alpha_n}{n}\right)S_n+(1-\alpha_n)\varepsilon_n.
$$
Moreover, for any $n\geq 1$,
$$E[S_n]=a_n\left(2q-1+\sum_{k=1}^{n-1}\frac{(1-\alpha_k)\varepsilon_k}{a_k}\right).$$

\item[{\rm (2)}] 
For any $n\geq 1$, 
$$E[S_{n+1}^2\mid {\cal F}_n]=\left(1+\frac{2\alpha_n}{n}\right) S_n^2+2(1-\alpha_n)\varepsilon_nS_n+1$$
and 
$$E[S_n^2]=b_n\left(\sum_{k=1}^{n}\frac{1}{b_k}+2\sum_{k=1}^{n-1}\frac{(1-\alpha_k)\varepsilon_k}{b_{k+1}}E[S_k]\right).$$

\item[{\rm (3)}] $\{M_n\}_{n=1}^{\infty}$ is a square integrable martingale with respect to the filtration $\{{\cal F}_n\}_{n\geq 1}$.
\end{enumerate}
\end{lem}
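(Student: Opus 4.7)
The plan is to derive everything directly from the transition rule \eqref{eq:dist} by elementary conditional-expectation manipulations, with the closed-form identities in (1) and (2) obtained by telescoping the resulting first-order linear recursions in $n$.

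For part (1), I would start from \eqref{eq:dist} and use $\#\{i\le n:X_i=\pm 1\}=(n\pm S_n)/2$ to rewrite the conditional distribution of $X_{n+1}$ in terms of $S_n$. Subtracting $P(X_{n+1}=-1\mid\mathcal{F}_n)$ from $P(X_{n+1}=1\mid\mathcal{F}_n)$ gives the stated expression for $E[X_{n+1}\mid\mathcal{F}_n]$, and adding $S_n$ to both sides yields the formula for $E[S_{n+1}\mid\mathcal{F}_n]$. Taking unconditional expectations produces the scalar recursion $E[S_{n+1}]=(1+\alpha_n/n)\,E[S_n]+(1-\alpha_n)\varepsilon_n$. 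Since $1+\alpha_n/n=a_{n+1}/a_n$, dividing through by $a_{n+1}$ turns the recursion into an additive one for $E[S_n]/a_n$, which telescopes from the initial value $E[S_1]=2q-1$ to give the claimed closed form.

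For part (2), the key identity is $X_{n+1}^2=1$, so that $S_{n+1}^2=S_n^2+2S_nX_{n+1}+1$. Applying $E[\cdot\mid\mathcal{F}_n]$ and substituting the conditional expectation of $X_{n+1}$ from (1) gives the stated recursion for $E[S_{n+1}^2\mid\mathcal{F}_n]$. Taking unconditional expectations and using $1+2\alpha_n/n=b_{n+1}/b_n$, I would divide by $b_{n+1}$ and telescope, carefully tracking both the linear term involving $E[S_n]$ and the constant $1$ appearing at each step; these constants accumulate into the $\sum_{k=1}^n 1/b_k$ piece of the closed form, while the linear terms produce the other sum.

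For part (3), the martingale property is then immediate: $E[M_{n+1}\mid\mathcal{F}_n]=(E[S_{n+1}\mid\mathcal{F}_n]-E[S_{n+1}])/a_{n+1}$, and after inserting the expressions from (1) the deterministic drifts $(1-\alpha_n)\varepsilon_n$ cancel, leaving $(a_{n+1}/a_n)(S_n-E[S_n])/a_{n+1}=M_n$. Square integrability is automatic because $|S_n|\le n$ forces $|M_n|\le (n+|E[S_n]|)/a_n<\infty$. None of these steps presents a substantive obstacle; the only mild bookkeeping concern is keeping the index shifts between $a_k$ and $a_{k+1}$ (and between $b_k$ and $b_{k+1}$) consistent through the telescoping so that the final sums line up exactly with the formulas stated in the lemma.
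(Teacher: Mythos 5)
Your proposal is correct and follows the same route the paper intends: the paper omits the proof of this lemma, deferring to \cite[Section 3]{KT19}, and the argument there is exactly this conditional-expectation computation from \eqref{eq:dist} followed by telescoping of the resulting first-order linear recursions, with the martingale property and square integrability obtained as you describe. One remark on the ``bookkeeping concern'' you raise at the end: carrying out the telescoping in (1) actually yields $E[S_n]=a_n\bigl(2q-1+\sum_{k=1}^{n-1}(1-\alpha_k)\varepsilon_k/a_{k+1}\bigr)$, with $a_{k+1}$ in the denominator (consistent with the $b_{k+1}$ that appears in part (2)); the $a_k$ printed in the statement appears to be an index misprint, as one can check directly at $n=2$, and it is harmless for every later use in the paper since $a_{k+1}/a_k\to 1$. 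So your computation is the correct one and should not be contorted to reproduce the printed sum verbatim.
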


We next present asymptotic properties of series and products  
which will be used in the subsequent sections. 
Let $g_1=1$ and $l_1=1$, and let 
\begin{equation}\label{eq:slow-seq}
g_n=\prod_{k=1}^{n-1}\left(1+\frac{\alpha}{k}\right), \quad 
l_n=\prod_{k=1}^{n-1}\left(1+\frac{\alpha_k-\alpha}{k+\alpha}\right) \quad (n\geq 2).
\end{equation}
Then by definition, $a_n=g_nl_n$. 
Let  $\rho_1=1$ and 
$$\rho_n=\exp\left(\sum_{k=1}^{n-1}\frac{\alpha_k-\alpha}{k}\right) \quad (n\geq 2).$$
\begin{lem}\label{lem:series}
\begin{enumerate}
\item[{\rm (1)}] For any $\alpha\in [0,1]$,
\begin{equation}\label{eq:prod-asymp}
n^{1-\alpha}\left(g_n-\frac{n^{\alpha}}{\Gamma(\alpha+1)}\right)
\rightarrow -\frac{\alpha(1-\alpha)}{2\Gamma(\alpha+1)}\quad (n\rightarrow\infty).
\end{equation}
\item[{\rm (2)}] 
There exists a positive constant $C_0$ such that $b_n\sim C_0a_n^2$.

\item[{\rm (3)}]
 $a_n/n\rightarrow 0 \ (n\rightarrow\infty)$ if and only if 
\begin{equation}\label{eq:conv-alpha}
\sum_{n=1}^{\infty}\frac{1-\alpha_n}{n}=\infty.
\end{equation}
\item[{\rm (4)}] 
Assume that the sequence $\{\alpha_n\}_{n=1}^{\infty}$ is convergent  to some $\alpha\in [0,1]$. 
Then the sequence $\{l_n\}_{n=1}^{\infty}$ is slowly varying, that is, 
for any $\lambda>0$,
$$\lim_{n\rightarrow\infty}\frac{l_{[\lambda n]}}{l_n}=1.$$
Moreover, there exists a positive constant $C_*$ such that $l_n\sim C_*\rho_n$.
In particular, 
$$a_n\sim \frac{C_*}{\Gamma(\alpha+1)}n^{\alpha}\rho_n$$
and $\{a_n\}_{n=1}^{\infty}$ is a regularly varying sequence with index $\alpha$. 

\item[{\rm (5)}] If $\alpha=1$, then \eqref{eq:conv-alpha} holds if and only if  
\begin{equation}\label{eq:conv-a}
\sum_{n=1}^{\infty}\frac{1-\alpha_n}{a_n}=\infty.
\end{equation}
Moreover, if this condition is valid, then 
\begin{equation}\label{eq:sum-a}
\lim_{n\rightarrow\infty}\rho_n\sum_{k=1}^{n}\frac{1-\alpha_k}{a_k}=\frac{1}{C_*}
\end{equation}
and 
\begin{equation}\label{eq:sum-b}
\lim_{n\rightarrow\infty}\frac{a_n}{n}\sum_{k=1}^n\frac{1-\alpha_k}{a_k}=1.
\end{equation}
\end{enumerate}
\end{lem}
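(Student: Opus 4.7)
The plan is to linearize each product by taking logarithms and then apply standard asymptotic manipulations of series; the five parts are largely independent, so I would treat them in order.

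For (1), I would rewrite $g_n = \Gamma(n+\alpha)/(\Gamma(n)\Gamma(\alpha+1))$ and invoke the refined Stirling expansion $\Gamma(n+\alpha)/\Gamma(n) = n^{\alpha}(1+\alpha(\alpha-1)/(2n) + O(n^{-2}))$, from which the limit is immediate after multiplying by $n^{1-\alpha}$. For (2), the logarithm of each factor in $b_n/a_n^2 = \prod_{k=1}^{n-1}(1+2\alpha_k/k)/(1+\alpha_k/k)^2$ equals $-\alpha_k^2/k^2 + O(1/k^3)$ uniformly in $\alpha_k\in[0,1]$, so the log sum converges absolutely and $b_n/a_n^2 \to C_0 > 0$. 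For (3), Taylor expansion of $\log(1+\alpha_k/k)$ together with the harmonic asymptotic $\log n = \sum_{k=1}^{n-1}1/k + O(1)$ gives $\log(a_n/n) = -\sum_{k=1}^{n-1}(1-\alpha_k)/k + O(1)$, and the equivalence is immediate.

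For (4), the decomposition
\[
\log l_n - \log \rho_n = \sum_{k=1}^{n-1}\Bigl[\log\!\Bigl(1+\frac{\alpha_k-\alpha}{k+\alpha}\Bigr) - \frac{\alpha_k-\alpha}{k}\Bigr]
\]
has summand of size $O(1/k^2)$ (by a second-order Taylor estimate and $|\alpha_k-\alpha|\leq 1$), so the series converges and $l_n/\rho_n \to C_* > 0$. Slow variation then reduces to $\rho_{[\lambda n]}/\rho_n \to 1$, which follows from the bound $|\sum_{k=n}^{[\lambda n]-1}(\alpha_k-\alpha)/k| \leq \sup_{k\geq n}|\alpha_k-\alpha|\cdot \sum_{k=n}^{[\lambda n]-1}1/k \to 0 \cdot \log\lambda = 0$. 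The asymptotic $a_n = g_n l_n \sim (C_*/\Gamma(\alpha+1))\,n^{\alpha}\rho_n$ and regular variation with index $\alpha$ then combine this with (1).

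For (5), applying (4) with $\alpha=1$ gives $a_n \sim C_* n \rho_n$. The equivalence of \eqref{eq:conv-alpha} and \eqref{eq:conv-a} is immediate by cases: if $\sum(1-\alpha_k)/k < \infty$ then $\rho_n$ is bounded below and $a_n \asymp n$, so the two sums are comparable; if $\sum(1-\alpha_k)/k = \infty$ then $a_n/n \to 0$ by (3), so $1/a_k > 1/k$ eventually and divergence transfers. For \eqref{eq:sum-a} the key identity is the discrete chain rule
\[
\frac{1}{\rho_{k+1}} - \frac{1}{\rho_k} = \frac{1}{\rho_k}\bigl(e^{(1-\alpha_k)/k} - 1\bigr) = (1+\epsilon_k)\,\frac{1-\alpha_k}{k\rho_k},
\]
with $\epsilon_k \to 0$ since $\alpha_k \to 1$. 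Telescoping $1/\rho_{n+1} - 1 = \sum_{k=1}^n (1/\rho_{k+1} - 1/\rho_k)$, together with a Toeplitz/Ces\`aro argument (exploiting that the summands are nonnegative and their partial sums diverge) absorbs the $\epsilon_k$ and yields $\sum_{k=1}^n (1-\alpha_k)/(k\rho_k) \sim 1/\rho_{n+1}$. Multiplying by $\rho_n$ and dividing by $C_*$ via $a_k \sim C_* k \rho_k$ gives \eqref{eq:sum-a}; then \eqref{eq:sum-b} follows from $a_n/n \sim C_* \rho_n$. The main obstacle is this Ces\`aro-type step, where the vanishing factors $\epsilon_k$ multiply a divergent series; the standard remedy is to split the sum at a large index $N$, use the uniform smallness of $\epsilon_k$ beyond $N$, and absorb the initial segment into $o(1/\rho_{n+1})$.
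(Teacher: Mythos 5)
Your proposal is correct and follows essentially the same route as the paper: the Stirling/Gamma-ratio expansion for (1), an absolutely convergent logarithmic series for (2) and (4), and comparison of $\sum_k(1-\alpha_k)/a_k$ with $1/\rho_n$ for (5). The only cosmetic differences are that you prove (3) directly rather than citing \cite{WQY12}, and you derive \eqref{eq:sum-a} by telescoping $1/\rho_{k+1}-1/\rho_k$ instead of invoking the paper's Lemma \ref{lem:series-order}, which is the same estimate in a different guise.
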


\begin{proof}
(3) is proved in \cite[Lemma 2]{WQY12}.  
We prove (1), (2), (4) and (5) in this order. 

By the Stirling formula for the Gamma function 
(see, e.g., \cite[Table 1]{W68}), 
$$\Gamma(t)=\sqrt{\frac{2\pi}{t}}\left(\frac{t}{e}\right)^{t}
\left(1+\frac{1}{12t}+O\left(\frac{1}{t^2}\right)\right) \quad (t\rightarrow\infty).$$
Hence we have as $n\rightarrow\infty$,
$$g_n=\frac{\Gamma(n+\alpha)}{\Gamma(n)\Gamma(\alpha+1)}
=\frac{1}{\Gamma(\alpha+1)e^{\alpha}}\sqrt{\frac{n}{n+\alpha}}\left(1+\frac{\alpha}{n}\right)^n(n+\alpha)^{\alpha}
\left(1+O\left(\frac{1}{n^2}\right)\right).$$ 
Then by elementary calculus, we obtain (1).

Since
$$
\frac{b_n}{a_n}=\prod_{k=1}^{n-1}\frac{k+2\alpha_k}{k+\alpha_k}
=\prod_{k=1}^{n-1}\left(1+\frac{\alpha_k}{k+\alpha_k}\right)
=a_n\prod_{k=1}^{n-1}\left(1-\frac{\alpha_k^2}{k^2+2k\alpha_k+\alpha_k^2}\right)
$$
and 
$$0\leq \sum_{n=1}^{\infty}\frac{\alpha_n^2}{n^2+2n\alpha_n+\alpha_n^2}
\leq \sum_{n=1}^{\infty}\frac{1}{n^2}<\infty,$$
the infinite product 
$$C_0:=\prod_{n=1}^{\infty}\left(1-\frac{\alpha_n^2}{n^2+2n\alpha_n+\alpha_n^2}\right)$$
is convergent so that we have (2).

By the Taylor theorem, for any $x>-1$, there exists a constant $\theta\in (0,1)$ such that 
$$\log(1+x)=x-\frac{(\theta x)^2}{2}.$$
Hence for all sufficiently large $k\geq 1$, 
there exist constants $\theta_k\in (0,1)$ such that
$$
\log\left(1+\frac{\alpha_k-\alpha}{k+\alpha}\right)
=\frac{\alpha_k-\alpha}{k+\alpha}-\frac{\theta_k^2}{2}\left(\frac{\alpha_k-\alpha}{k+\alpha}\right)^2
=\frac{\alpha_k-\alpha}{k}
-\left(\frac{\alpha(\alpha_k-\alpha)}{k(k+\alpha)}
+\frac{\theta_k^2}{2}\left(\frac{\alpha_k-\alpha}{k+\alpha}\right)^2\right).
$$
Combining this relation with the expression 
$$l_n=\exp\left\{\sum_{k=1}^{n-1}\log\left(1+\frac{\alpha_k-\alpha}{k+\alpha}\right)\right\},$$
we have 
$\l_n/\rho_n\rightarrow C_* \ (n\rightarrow\infty)$ with 
$$C_*:=\exp\left\{-\sum_{n=1}^{\infty}
\left(\frac{\alpha(\alpha_n-\alpha)}{n(n+\alpha)}
+\frac{\theta_n^2}{2}\left(\frac{\alpha_n-\alpha}{n+\alpha}\right)^2\right)\right\}.$$ 
Since the sequence $\{\rho_n\}_{n=1}^{\infty}$ is slowly varying, so is the sequence $\{l_n\}_{n=1}^{\infty}$.
Then by (1), the proof of (4) is complete. 

Suppose that $\alpha=1$. 
If \eqref{eq:conv-alpha} holds, 
then \eqref{eq:conv-a} follows by (3). 
We now assume that \eqref{eq:conv-a} holds but \eqref{eq:conv-alpha} fails. 
Then by (4),
\begin{equation}\label{eq:seq-sum}
\begin{split}
\sum_{k=1}^{n-1}\frac{1-\alpha_k}{a_k}\sim \frac{1}{C_*}\sum_{k=1}^n\frac{1-\alpha_k}{k \rho_{k+1}}
=\frac{1}{C_*}\sum_{k=1}^n\frac{1-\alpha_k}{k}\exp\left(\sum_{l=1}^k\frac{1-\alpha_l}{l}\right).
\end{split}
\end{equation} 
Since the right hand side above is convergent as $n\rightarrow\infty$ by assumption, 
we have a contradiction so that \eqref{eq:conv-alpha} holds. 
Then \eqref{eq:series-order-2} and \eqref{eq:seq-sum} yield \eqref{eq:sum-a}. 
Combining this with  (4), we get \eqref{eq:sum-b}.  
\end{proof}

\section{Law of the large numbers}\label{sect:lln}
In this section, we establish  the law of the large numbers 
for $\{S_n\}_{n=0}^{\infty}$.

\begin{thm}\label{thm:lln-0}
Let $\{r_n\}_{n=1}^{\infty}$ be a positive sequence such that $a_n/r_n\rightarrow 0$ as $n\rightarrow\infty$ 
and $\sum_{n=1}^{\infty}1/r_n^2<\infty$.
Then 
$$\lim_{n\rightarrow\infty}\frac{S_n-E[S_n]}{r_n}=0, \quad \text{$P$-a.s.}$$
\end{thm}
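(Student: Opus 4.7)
The plan is to derive Theorem~\ref{thm:lln-0} from a strong law of large numbers applied to the square integrable martingale $M_n := (S_n - E[S_n])/a_n$ supplied by Lemma~\ref{lem:moment}(3). Since
$$
\frac{S_n - E[S_n]}{r_n} \;=\; \frac{a_n}{r_n}\,M_n,
$$
and $\phi_n := r_n/a_n \to \infty$ by hypothesis, the statement is equivalent to $M_n/\phi_n \to 0$ $P$-a.s.

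The first step is to obtain explicit control on the martingale increments. Using Lemma~\ref{lem:moment}(1) together with the recursion $a_k = a_{k-1}(1+\alpha_{k-1}/(k-1))$, one gets $\Delta M_k = \xi_k/a_k$ for $k\geq 2$, where $\xi_k := X_k - E[X_k\mid {\cal F}_{k-1}]$. Since $X_k \in \{-1,1\}$, the bound $E[\xi_k^2\mid{\cal F}_{k-1}] = 1 - E[X_k\mid{\cal F}_{k-1}]^2 \leq 1$ gives $E[(\Delta M_k)^2\mid{\cal F}_{k-1}] \leq 1/a_k^2$.

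Next I would introduce the auxiliary martingale
$$
\widetilde N_n \;:=\; \sum_{k=1}^n \phi_k^{-1}\,\Delta M_k \;=\; \sum_{k=1}^n \frac{\xi_k}{r_k}.
$$
Its predictable quadratic variation is bounded by $\sum_{k=1}^{\infty} 1/r_k^2 < \infty$, so $\widetilde N_n$ is $L^2$-bounded and converges $P$-a.s.\ to a finite limit $\widetilde N_{\infty}$ by Doob's convergence theorem. Since $\Delta M_k = \phi_k\,\Delta \widetilde N_k$, an Abel summation yields
$$
\frac{M_n}{\phi_n} \;=\; \widetilde N_n \;-\; \frac{1}{\phi_n}\sum_{k=1}^{n-1}(\phi_{k+1}-\phi_k)\,\widetilde N_k,
$$
and a Toeplitz/Ces\`aro-type averaging on the right-hand side, using $\phi_n \to \infty$ and $\widetilde N_k \to \widetilde N_{\infty}$, should force this quantity to vanish.

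The main technical hurdle lies in that last step, which in the cleanest form requires some regularity (e.g.\ eventual monotonicity) of the deterministic sequence $\{\phi_k\}$ to guarantee $\phi_n^{-1}\sum_{k=1}^{n-1}(\phi_{k+1}-\phi_k)\widetilde N_k \to \widetilde N_{\infty}$. The cleanest way to bypass this is to invoke directly the strong law of large numbers for square integrable martingales (cf.\ Hall and Heyde~\cite{HH80}): since $\sum_k E[(\Delta M_k)^2\mid{\cal F}_{k-1}]/\phi_k^2 \leq \sum_{k=1}^{\infty} 1/r_k^2 < \infty$ and $\phi_n \to \infty$, the theorem yields $M_n/\phi_n \to 0$ $P$-a.s., which is precisely the desired conclusion.
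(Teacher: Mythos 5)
Your proposal is correct and follows essentially the same route as the paper: the paper also passes to the martingale differences $d_j=(X_j-E[X_j\mid {\cal F}_{j-1}])/a_j$, shows that $\sum_j (X_j-E[X_j\mid{\cal F}_{j-1}])/r_j$ (your $\widetilde N_n$) converges almost surely via the $L^2$-boundedness criterion of Hall--Heyde, and then concludes by Kronecker's lemma with the normalizing sequence $r_n/a_n\to\infty$. The regularity issue you flag in the Abel-summation step is present in the paper's argument as well (Kronecker's lemma is applied with the sequence $r_n/a_n$, whose monotonicity is not discussed), so your treatment is, if anything, slightly more explicit about this point.
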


If $r_n=n \ (n\geq 1)$, then this theorem is proved by 
\cite[Theorem 1]{CGS17}, \cite[Theorem 2.1]{JJQ08}, 
\cite[Theorem 1]{KT19} and \cite[Theorem 1]{WQY12}.  
As we will see from the proof of Theorem \ref{thm:lln-0} below, 
their approach still works for more general sequences $\{r_n\}_{n=1}^{\infty}$ 
because the sequence $\{(X_n-E[X_n\mid {\cal F}_{n-1}])/r_n\}_{n=1}^{\infty}$ is a martingale difference.
\begin{proof}[Proof of Theorem {\rm \ref{thm:lln-0}}]
Let 
$$
d_j=\begin{cases}M_1 & (j=1),\\ 
M_{j}-M_{j-1} & (j\geq 2)
\end{cases}
$$
and $\gamma_j=1+\alpha_j/j$.
Then by Lemma \ref{lem:moment} (1),
\begin{equation}\label{eq:difference}
\begin{split}
d_j
&=\frac{S_j-E[S_j]-\gamma_{j-1}(S_{j-1}-E[S_{j-1}])}{a_j}
=\frac{S_j-\gamma_{j-1}S_{j-1}-(1-\alpha_{j-1})\varepsilon_{j-1}}{a_j}\\
&=\frac{S_j-E[S_j\mid {\cal F}_{j-1}]}{a_j}
=\frac{X_j-E[X_j\mid {\cal F}_{j-1}]}{a_j},
\end{split}
\end{equation}
which yields
\begin{equation}\label{eq:cond-d}
\frac{X_j-E[X_j\mid {\cal F}_{j-1}]}{r_j}=\frac{d_ja_j}{r_j}.
\end{equation}

By assumption, 
$$\sum_{j=1}^{\infty}E\left[\left(\frac{X_j-E[X_j\mid {\cal F}_{j-1}]}{r_j}\right)^2\mid{\cal F}_{j-1}\right]
\leq \sum_{j=1}^{\infty}\frac{4}{r_j^2}<\infty.$$
Hence by \cite[Theorem 2.15]{HH80} and \eqref{eq:cond-d}, the series
$$\sum_{j=1}^{\infty}\frac{X_j-E[X_j\mid{\cal F}_{j-1}]}{r_j}
=\sum_{j=1}^{\infty}\frac{d_j}{r_j/a_j}$$
converges almost surely.
Since $a_n/r_n\rightarrow 0 \ (n\rightarrow \infty)$ by assumption, 
we obtain by Kronecker's lemma,
$$\frac{S_n-E[S_n]}{r_n}=\frac{a_n}{r_n}M_n=\frac{a_n}{r_n}\sum_{j=1}^nd_j\rightarrow 0, \quad \text{$P$-a.s.}$$
The proof is complete.
\end{proof}

By Theorem \ref{thm:lln-0} with Lemma \ref{lem:moment} (1), we have the growth exponent of $\{S_n\}_{n=0}^{\infty}$:
\begin{cor}\label{cor:lln-0}
Let 
$$r_n=a_n\sum_{k=1}^n\frac{(1-\alpha_k)\varepsilon_k}{a_k}.$$
If $a_n/r_n\rightarrow 0 \ (n\rightarrow\infty)$ and $\sum_{n=1}^{\infty}1/r_n^2<\infty$, then 
\begin{equation}\label{eq:lln-0}
\lim_{n\rightarrow\infty}\frac{S_n}{r_n}=1, \quad \text{$P$-a.s.}
\end{equation}
\end{cor}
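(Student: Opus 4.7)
The plan is to derive the corollary essentially mechanically by combining Theorem \ref{thm:lln-0} with the explicit formula for $E[S_n]$ from Lemma \ref{lem:moment}~(1). Under the hypotheses of the corollary, Theorem \ref{thm:lln-0} applies directly and yields
$$\frac{S_n - E[S_n]}{r_n} \longrightarrow 0, \quad P\text{-a.s.}$$
Therefore the whole task reduces to showing the deterministic statement $E[S_n]/r_n \to 1$.

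For that, I would start from
$$E[S_n] = a_n\left(2q-1 + \sum_{k=1}^{n-1}\frac{(1-\alpha_k)\varepsilon_k}{a_k}\right),$$
and rewrite it in terms of $r_n = a_n \sum_{k=1}^n (1-\alpha_k)\varepsilon_k/a_k$ by peeling off the last term of the sum. This gives
$$E[S_n] = a_n(2q-1) + r_n - (1-\alpha_n)\varepsilon_n,$$
so that
$$\frac{E[S_n]}{r_n} = 1 + \frac{(2q-1)a_n}{r_n} - \frac{(1-\alpha_n)\varepsilon_n}{r_n}.$$
By hypothesis $a_n/r_n \to 0$, and since $\sum 1/r_n^2 < \infty$ forces $r_n \to \infty$, the last term (which is bounded by $1/r_n$) also vanishes. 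Hence $E[S_n]/r_n \to 1$.

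Combining the two ingredients gives $S_n/r_n \to 1$ almost surely. There is no real obstacle: the only point that requires a small check is that the conditions $\sum 1/r_n^2 < \infty$ and $a_n/r_n \to 0$ together imply both $r_n \to \infty$ and the negligibility of the boundary terms separating $E[S_n]$ from $r_n$. Everything else follows from quoting Lemma \ref{lem:moment}~(1) and Theorem \ref{thm:lln-0}.
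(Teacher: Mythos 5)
Your proposal is correct and follows exactly the route the paper intends: the paper derives the corollary by combining Theorem \ref{thm:lln-0} with the mean formula of Lemma \ref{lem:moment} (1), which is precisely your decomposition $E[S_n]=a_n(2q-1)+r_n-(1-\alpha_n)\varepsilon_n$ together with $a_n/r_n\to 0$. The small checks you flag (that $\sum 1/r_n^2<\infty$ gives $r_n\to\infty$, and that $(1-\alpha_n)\varepsilon_n\le 1$ makes the boundary term negligible) are exactly what is needed, so there is nothing to add.
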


We can also prove the $L^2$-convergence of $S_n/r_n$ as in \cite[Theorem 4]{KT19}.
Let $w_n=\sum_{k=1}^n 1/{a_k^2}$. 
\begin{thm}\label{thm:conv-limit}
Let 
$$r_n=a_n\sum_{k=1}^n\frac{(1-\alpha_k)\varepsilon_k}{a_k}.$$
If $a_n/r_n\rightarrow 0 \ (n\rightarrow\infty)$, then $E[S_n]\sim r_n$.
Moreover, if $a_n\sqrt{w_n}/r_n\rightarrow 0 \ (n\rightarrow\infty)$,
then $E[S_n^2]\sim E[S_n]^2$ and 
\begin{equation}\label{eq:div-3}
\lim_{n\rightarrow\infty}\frac{S_n}{r_n}=1 \quad \text {in $L^2(P)$.}
\end{equation}
\end{thm}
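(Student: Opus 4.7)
The plan is to derive the first claim $E[S_n]\sim r_n$ directly from the explicit moment formula in Lemma \ref{lem:moment}(1), and then to control the $L^2$-fluctuation via the square-integrable martingale $\{M_n\}_{n=1}^{\infty}$ introduced in Section \ref{sect:pre}. The definition of $r_n$ and the formula for $E[S_n]$ differ only by the boundary quantities $a_n(2q-1)$ and $(1-\alpha_n)\varepsilon_n$, both of which are $O(a_n)$ since $a_n\ge 1$ and the second is bounded by $1$. Because $a_n/r_n\to 0$ forces $r_n\to\infty$, these boundary terms are $o(r_n)$, yielding $E[S_n]/r_n\to 1$ and hence the first assertion.

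For the second claim the key quantity is $\mathrm{Var}(S_n)=a_n^2 E[M_n^2]$. Using the martingale-difference representation $d_j=(X_j-E[X_j\mid\mathcal{F}_{j-1}])/a_j$ from \eqref{eq:difference} together with the trivial bound $\mathrm{Var}(X_j\mid\mathcal{F}_{j-1})\le 1$ (which follows from $X_j\in\{-1,1\}$), I would obtain
$$
E[M_n^2]=\sum_{j=1}^n E[d_j^2]\le \sum_{j=1}^n \frac{1}{a_j^2}=w_n,
$$
and therefore $\mathrm{Var}(S_n)\le a_n^2 w_n$. Combined with the already established $E[S_n]\sim r_n$ and the stronger hypothesis $a_n\sqrt{w_n}/r_n\to 0$, this gives
$$
\frac{\mathrm{Var}(S_n)}{E[S_n]^2}\le \frac{a_n^2 w_n}{E[S_n]^2}\longrightarrow 0,
$$
which is precisely the statement $E[S_n^2]\sim E[S_n]^2$.

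Finally, the $L^2$-convergence \eqref{eq:div-3} follows from expanding
$$
E\!\left[\left(\frac{S_n}{r_n}-1\right)^{\!2}\right]=\frac{E[S_n^2]}{r_n^2}-2\,\frac{E[S_n]}{r_n}+1,
$$
and inserting the three asymptotics $E[S_n]/r_n\to 1$, $E[S_n^2]/E[S_n]^2\to 1$ and $E[S_n]^2/r_n^2\to 1$, which together force the right-hand side to $0$. The argument is largely bookkeeping once the variance estimate $\mathrm{Var}(S_n)\le a_n^2 w_n$ is in hand, and that single step, trivial as it looks, is the real substance of the proof; the uniform conditional-variance bound on the Bernoulli-type steps $X_j$ is exactly what makes the hypothesis $a_n\sqrt{w_n}/r_n\to 0$ the natural quantitative assumption. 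I do not foresee any genuine obstacle beyond verifying that $r_n\to\infty$, which is immediate from $a_n\ge 1$ and $a_n/r_n\to 0$.
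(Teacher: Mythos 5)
Your proof is correct, but it reaches the key estimate $E[S_n^2]\sim E[S_n]^2$ by a genuinely different route than the paper. The paper works from the explicit second-moment formula of Lemma \ref{lem:moment}~(2), $E[S_n^2]=b_n\bigl(\sum_{k=1}^{n}1/b_k+2\sum_{k=1}^{n-1}(1-\alpha_k)\varepsilon_k E[S_k]/b_{k+1}\bigr)$, and then invokes $b_n\sim C_0a_n^2$ (Lemma \ref{lem:series}~(2)) together with the series asymptotic $\sum_k c_k(\sum_{l\le k}c_l)\sim\frac12(\sum_k c_k)^2$ (Lemma \ref{lem:series-order}) to show that the cross term is asymptotic to $E[S_n]^2$ while the term $b_n\sum_{k\le n}1/b_k\asymp a_n^2w_n$ is $o(r_n^2)$. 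You instead bound the variance directly: since $M_n=\sum_{j\le n}d_j$ has orthogonal increments and $\mathrm{Var}(X_j\mid{\cal F}_{j-1})=1-E[X_j\mid{\cal F}_{j-1}]^2\le 1$, you get $\mathrm{Var}(S_n)=a_n^2E[M_n^2]=a_n^2\sum_{j\le n}E[d_j^2]\le a_n^2w_n=o(E[S_n]^2)$, which yields $E[S_n^2]\sim E[S_n]^2$ without touching Lemma \ref{lem:moment}~(2) or the series lemmas at all. Your argument is more elementary and shorter; what the paper's computation buys in exchange is the exact identification of both contributions to $E[S_n^2]$ (in particular that the cross term alone is $\sim E[S_n]^2$), which is sharper information than the one-sided bound $\mathrm{Var}(S_n)\le a_n^2w_n$, though that extra precision is not needed for the statement being proved. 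The remaining steps (the $o(r_n)$ treatment of the boundary terms $a_n(2q-1)$ and $(1-\alpha_n)\varepsilon_n$ in $E[S_n]-r_n$, and the final expansion of $E[(S_n/r_n-1)^2]$) coincide with the paper's and are sound.
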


\begin{proof}
Assume that $a_n/r_n\rightarrow 0 \ (n\rightarrow\infty)$. 
Then by Lemma \ref{lem:moment} (1), $E[S_n]\sim r_n$.
Hence by Lemma \ref{lem:series} (2) and Lemma \ref{lem:series-order},
\begin{equation*}
\begin{split}
2b_n\sum_{k=1}^{n-1}\frac{(1-\alpha_k)\varepsilon_k}{b_{k+1}}E[S_k]
&\sim 2 a_n^2\sum_{k=1}^n\frac{(1-\alpha_k)\varepsilon_k}{a_k}\left(\sum_{l=1}^k\frac{(1-\alpha_l)\varepsilon_l}{a_l}\right)\\
&\sim a_n^2
\left(\sum_{k=1}^n\frac{(1-\alpha_k)\varepsilon_k}{a_k}\right)^2
\sim E[S_n]^2.
\end{split}
\end{equation*}
Lemma \ref{lem:series} (2) also implies that for some positive constants $c_1$ and $c_2$, 
$c_1 \sum_{k=1}^{n}1/b_k\leq w_n\leq c_2 \sum_{k=1}^{n}1/b_k \ (n\geq 1)$. 
Therefore, if $a_n\sqrt{w_n}/r_n\rightarrow 0 \ (n\rightarrow\infty)$, then
Lemma \ref{lem:moment} (2) yields $E[S_n^2]\sim E[S_n]^2$.  
Since
$$\lim_{n\rightarrow\infty}\frac{E[(S_n-E[S_n])^2]}{E[S_n]^2}=0,$$
we obtain \eqref{eq:div-3}.
\end{proof}

\begin{rem}\label{rem:as-conv}\rm 
Let $\alpha_n=\alpha\in [0,1)$ and $\varepsilon_n=n^{-\rho}$ for some $\rho>0$. 
Then by Corollary \ref{cor:lln-0} and Theorem \ref{thm:conv-limit}, we have:
\begin{itemize}
\item If $0\leq \alpha<1-\rho$ and $0<\rho<1/2$, then 
$$\lim_{n\rightarrow\infty}\frac{S_n}{n^{1-\rho}}=\frac{1-\alpha}{1-(\alpha+\rho)}, \ \text{$P$-a.s.\ and in $L^2(P)$.}$$
\item If $\alpha=1-\rho$ and $0<\rho<1/2$, then 
$$\lim_{n\rightarrow\infty}\frac{S_n}{n^{\alpha}\log n}=1-\alpha, \ \text{$P$-a.s.\ and in $L^2(P)$.}$$
\item If $\alpha=\rho=1/2$, then 
$$\lim_{n\rightarrow\infty}\frac{S_n}{\sqrt{n}\log n}=\frac{1}{2}, \ \text{$P$-a.s.\ and in $L^2(P)$}$$
\end{itemize}
(see the subsequent examples for the validity of these calculations).
The $L^2$-convergence results above are already proved in \cite[Theorem 4]{KT19}.
\end{rem}

We now apply Theorem \ref{thm:lln-0} to find the scaling factor  of $\{S_n\}_{n=0}^{\infty}$. 
We first see that  if $\varepsilon>0$, 
then $\{S_n\}_{n=0}^{\infty}$ grows linearly under reasonable conditions.
\begin{exam}\label{exam:lln}\rm 
Let $0\leq \varepsilon\leq 1$. 

\noindent
(i)  Assume that $0\leq \alpha<1$. 
Since $\{a_n\}_{n=1}^{\infty}$ is a regularly varying sequence with index $\alpha$, we have 
$$\lim_{n\rightarrow\infty}\frac{a_n}{n}\sum_{k=1}^n\frac{1}{a_k}=\frac{1}{1-\alpha}$$
and thus 
\begin{equation}\label{eq:lim-linear}
\lim_{n\rightarrow\infty}\frac{r_n}{n}=\lim_{n\rightarrow\infty}\frac{a_n}{n}\sum_{k=1}^n\frac{(1-\alpha_k)\varepsilon_k}{a_k}=\varepsilon.
\end{equation}
As $a_n/n\rightarrow 0 \ (n\rightarrow\infty)$,  
we see that if $\varepsilon>0$, then by Corollary \ref{cor:lln-0} and Theorem \ref{thm:conv-limit}, 
\begin{equation}\label{eq:aslim-linear}
\lim_{n\rightarrow\infty}\frac{S_n}{n}=\varepsilon, \quad \text{$P$-a.s.\ and in $L^2(P)$.}
\end{equation}
This equality is valid also for $\varepsilon=0$ by Theorem \ref{thm:lln-0} and the calculation of $E[(S_n/n)^2]$. 
Moreover, since it follows by \eqref{eq:dist} that
$$P(X_n=\pm 1 \mid {\cal F}_{n-1})
=\frac{\alpha_n}{2}\left(1\pm \frac{S_n}{n}\right)+(1-\alpha_n)\frac{1\pm\varepsilon_n}{2},$$
we have 
\begin{equation}\label{eq:bernoulli}
P(X_n=\pm 1 \mid {\cal F}_{n-1})\rightarrow \frac{1\pm\varepsilon}{2}, \quad \text{$P$-a.s.\ and in $L^2(P)$.}
\end{equation}

\noindent
(ii) Assume that $\alpha=1$ and $\sum_{n=1}^{\infty}(1-\alpha_n)/n=\infty$. 
Since \eqref{eq:lim-linear} remains valid by \eqref{eq:sum-b}, 
we get \eqref{eq:aslim-linear}  and \eqref{eq:bernoulli}. 
\end{exam}

We next see that if $\varepsilon=0$ and $0\leq \alpha<1$, 
then the convergence rate of $\{\varepsilon_n\}_{n=1}^{\infty}$ affects 
the scaling factor of $\{S_n\}_{n=0}^{\infty}$. 

\begin{exam}\label{exam:lln-1}\rm 
Let $\varepsilon=0$ and $0\leq \alpha<1$. 
Assume that for some $\rho\in [0,1-\alpha)$, 
the sequence $\{\varepsilon_n\}_{n=1}^{\infty}$ is regularly varying 
with index $-\rho$. 
Let $r_n$ be as in Corollary \ref{cor:lln-0}. 
Since $\alpha+\rho<1$, we have 
$$\frac{r_n}{a_n}=\sum_{k=1}^n \frac{(1-\alpha_k)\varepsilon_k}{a_k}
\sim \frac{1-\alpha}{1-(\alpha+\rho)}\frac{n\varepsilon_n}{a_n}.$$
Namely,  $a_n/r_n\rightarrow 0$ and 
$$r_n\sim \frac{1-\alpha}{1-(\alpha+\rho)}n\varepsilon_n.$$

We now impose  the following condition on $\alpha$, $\rho$ and $\{\varepsilon_n\}_{n=1}^{\infty}$:
\begin{itemize}
\item $0\leq \rho<1/2\wedge(1-\alpha)$, or 
\item $\rho=1/2$ and for some positive constants $\eta$ and $c_1$, 
\begin{equation}\label{eq:as-bound}
\sqrt{n}\varepsilon_n\geq c_1(\log n)^{(1+\eta)/2} \ (n\geq 1).
\end{equation}
\end{itemize}
Note that under this condition, $0\leq \alpha<1/2$.
Since $\sum_{n=1}^{\infty}1/r_n^2<\infty$, 
we have by Corollary \ref{cor:lln-0}, 
\begin{equation}\label{eq:a-c-1}
\lim_{n\rightarrow\infty}\frac{S_n}{n\varepsilon_n}=\frac{1-\alpha}{1-(\alpha+\rho)}, \quad \text{$P$-a.s.}
\end{equation}
If we replace the condition \eqref{eq:as-bound} with $\lim_{n\rightarrow\infty}\sqrt{n}\varepsilon_n=\infty$,
then by Theorem \ref{thm:conv-limit}, \eqref{eq:a-c-1} holds also in $L^2(P)$. 
\end{exam}

\begin{exam}\label{exam:0-case}\rm 
Let $\varepsilon=0$ and $0\leq \alpha<1$. 
Assume that 
the sequence $\{\varepsilon_n\}_{n=1}^{\infty}$ is regularly varying  
with index $-(1-\alpha)$. 
Let $r_n$ be as in Corollary \ref{cor:lln-0}. 
Then the scaling factor of $\{S_n\}_{n=0}^{\infty}$ depends also  
on the asymptotic behavior of $\{\alpha_n\}_{n=1}^{\infty}$. 
In what follows, we assume that 
for some constants $\eta$, $\kappa$ and $\theta>0$, 
$$\varepsilon_n=\frac{(\log n)^{\eta-1}}{n^{1-\alpha}}, 
\quad \alpha_n=\alpha+\frac{\kappa}{(\log n)^{\theta}} \quad (n\geq 2).$$

\noindent
(i) Let $0<\theta<1$. 
If $\kappa<0$, then by Example \ref{exam:order-l-a-1} with Lemma \ref{lem:series} (4), 
$$r_n\sim \frac{1-\alpha}{-\kappa}n^{\alpha}(\log n)^{\eta+\theta-1}$$
and $a_n/r_n\rightarrow 0 \ (n\rightarrow\infty)$. 
In particular, if 
\begin{itemize}
\item $0<\theta<1$, $\alpha>1/2$ and $\kappa<0$, or 
\item $0<\theta<1$, $\alpha=1/2$, $\kappa<0$ and $\eta+\theta>3/2$, 
\end{itemize}
then $\sum_{n=1}^{\infty}1/r_n^2<\infty$ so that $S_n/r_n\rightarrow 1$ $P$-a.s.\ by Corollary \ref{cor:lln-0}. 
If we replace the condition $\eta+\theta>3/2$ with $\eta+\theta/2>1$
and keep other conditions, 
then $a_n\sqrt{w_n}/r_n\rightarrow 0 \ (n\rightarrow\infty)$ 
by Examples \ref{exam:order-l-a-1} and \ref{exam:order-l-a-2}.  
Hence $S_n/r_n\rightarrow 1$ in $L^2(P)$ by Theorem \ref{thm:conv-limit}.

\noindent
(ii) Let $\theta=1$. If $\eta\geq \kappa$, then 
by Example \ref{exam:order-l-a-1} with Lemma \ref{lem:series} (4), 
$$r_n\sim \begin{cases}
\displaystyle \frac{1-\alpha}{\eta-\kappa}n^{\alpha}(\log n)^{\eta} & (\theta=1, \eta>\kappa),\\
(1-\alpha)n^{\alpha}(\log n)^{\eta}\log\log n & (\theta=1, \eta=\kappa)
\end{cases}$$
and $a_n/r_n\rightarrow 0 \ (n\rightarrow\infty)$. 
In particular, if 
\begin{itemize}
\item $\theta=1$, $\alpha>1/2$ and $\eta\geq \kappa$, or 
\item $\theta=1$, $\alpha=1/2$, and $\eta=\kappa\geq 1/2$ or $\eta>\kappa\vee (1/2)$, 
\end{itemize}
then by Examples \ref{exam:order-l-a-1} and \ref{exam:order-l-a-2}, 
$\sum_{n=1}^{\infty}1/r_n^2<\infty$ and $a_n\sqrt{w_n}/r_n\rightarrow 0 \ (n\rightarrow\infty)$ 
so that $S_n/r_n\rightarrow 1$ $P$-a.s.\ and in $L^2(P)$ 
by Corollary \ref{cor:lln-0} and Theorem \ref{thm:conv-limit}, respectively.

\noindent
(iii) Let $\theta>1$ or $\kappa=0$. 
If $\eta\geq 0$, then all the calculations for $\theta=1$ remain valid by taking $\kappa=0$. 
Therefore, if 
\begin{itemize}
\item $\theta>1$, $\alpha>1/2$ and $\eta\geq 0$, or 
\item $\theta>1$, $\alpha=1/2$ and $\eta>1/2$, 
\end{itemize}
then  $S_n/r_n\rightarrow 1$ $P$-a.s.\ and in $L^2(P)$ 
by Corollary \ref{cor:lln-0} and Theorem \ref{thm:conv-limit}, respectively.
\end{exam}

\begin{exam}\label{exam:lln-2}\rm 
Assume that for some positive constants $\eta$, $\kappa$ and $\theta$, 
$$\varepsilon_n=\frac{1}{(\log n)^{\eta}}, \quad \alpha_n=1-\frac{\kappa}{(\log n)^{\theta}} \quad (n\geq 1).$$
By Example \ref{exam:order-l-a-1} with Lemma \ref{lem:series} (4), 
if $0<\theta<1$, or if $\theta=1$ and $\kappa\geq \eta$, then 
$$r_n\sim
\begin{cases}
\displaystyle \frac{n}{(\log n)^{\eta}} & (0<\theta<1),\\
\displaystyle \frac{\kappa}{\kappa-\eta}\frac{n}{(\log n)^{\eta}} & (\theta=1, \kappa>\eta),\\
\displaystyle \frac{\kappa n\log\log n}{(\log n)^{\kappa}} & (\theta=1,\kappa=\eta)
\end{cases}$$
and $a_n/r_n\rightarrow 0 \ (n\rightarrow\infty)$.  
Since $\sum_{n=1}^{\infty}1/r_n^2<\infty$ and $\sum_{n=1}^{\infty}1/a_n^2<\infty$, 
we have $S_n/r_n\rightarrow 1$ $P$-a.s.\ and in $L^2(P)$ 
by Corollary \ref{cor:lln-0} and Theorem \ref{thm:conv-limit}, respectively.
\end{exam}

\section{Gaussian fluctuations}\label{sect:gauss}
In this section, we first reveal the Gaussian fluctuation of $\{S_n\}_{n=0}^{\infty}$ around the mean 
by proving the central limit theorem and law of the iterated logarithm. 
We next show that even if such a fluctuation is unavailable, 
the fluctuation around the random drift $a_nM_{\infty}$ is Gaussian. 
The former is proved in \cite{CGS17,JJQ08,KT19,WQY12} 
for the dependent Bernoulli processes and superdiffusive elephant random walks with $\varepsilon<1$; 
the latter is proved in  \cite{KT19} for the superdiffusive elephant random walks with $\varepsilon<1$.   
Our results in this section extend the both results above
to the random walks under consideration which can be degenerate in the sense that $\varepsilon=1$.

\subsection{Central limit theorem and law of the iterated logarithm}

Let $w_n=\sum_{k=1}^{n}1/a_k^2$ and $\phi(t)=\sqrt{2t\log\log t}$. 
We state the results separately in Theorems \ref{thm:fluct-0} and \ref{thm:quad-var} below. 

\begin{thm}\label{thm:fluct-0}
\begin{enumerate}
\item[{\rm (1)}] 
Let $\varepsilon\in [0,1)$. If $\sum_{n=1}^{\infty}1/a_n^2=\infty$, then 
$$
\sum_{k=1}^nE[d_k^2\mid {\cal F}_{k-1}]
\sim (1-\varepsilon^2)w_n.
$$
Moreover, 
$$\frac{S_n-E[S_n]}{a_n\sqrt{w_n}}\rightarrow N(0,1-\varepsilon^2) \quad \text{in distribution}$$
and 
$$\limsup_{n\rightarrow\infty}\pm \frac{S_n-E[S_n]}{a_n \phi(w_n)}=\sqrt{1-\varepsilon^2}, \quad \text{$P$-a.s.}$$
\item[{\rm (2)}] 
Let $\varepsilon\in [0,1]$. 
If $\sum_{n=1}^{\infty}1/a_n^2<\infty$, 
then 
\begin{equation}\label{eq:lim-m}
\lim_{n\rightarrow\infty}\frac{S_n-E[S_n]}{a_n}=M_{\infty}, \quad \text{$P$-a.s.\ and in $L^2(P)$}.
\end{equation}
In particular, $E[M_{\infty}]=0$ and $P(M_{\infty}\ne 0)>0$.
\end{enumerate}
\end{thm}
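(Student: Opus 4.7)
The plan is to analyze the square integrable martingale $\{M_n\}$ of Lemma \ref{lem:moment} (3), whose differences $d_k=(X_k-E[X_k\mid\mathcal{F}_{k-1}])/a_k$ from \eqref{eq:difference} satisfy the uniform bound $|d_k|\le 2/a_k$. Because $X_k\in\{-1,1\}$, the conditional variance reduces to
\[
E[d_k^2\mid\mathcal{F}_{k-1}]=\frac{1-E[X_k\mid\mathcal{F}_{k-1}]^2}{a_k^2},
\]
and by Lemma \ref{lem:moment} (1), $E[X_k\mid\mathcal{F}_{k-1}]=\alpha_{k-1}(S_{k-1}/(k-1))+(1-\alpha_{k-1})\varepsilon_{k-1}$. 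Combining this with the law of large numbers $S_n/n\to\varepsilon$ established in Example \ref{exam:lln} and the underlying assumption $\varepsilon_n\to\varepsilon$, the quantity $E[X_k\mid\mathcal{F}_{k-1}]^2$ converges to $\varepsilon^2$ almost surely, regardless of whether $\{\alpha_n\}$ converges.

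For Part (1), I would invoke a Toeplitz/weighted Ces\`aro argument with weights $1/a_k^2$. Since $w_n\to\infty$ by hypothesis, this immediately yields the conditional-variance asymptotic
\[
V_n:=\sum_{k=1}^n E[d_k^2\mid\mathcal{F}_{k-1}]\sim(1-\varepsilon^2)w_n \quad P\text{-a.s.}
\]
From here the martingale CLT of \cite{HH80} delivers $M_n/\sqrt{V_n}\to N(0,1)$ in distribution, which rearranges to $(S_n-E[S_n])/(a_n\sqrt{w_n})\to N(0,1-\varepsilon^2)$. The Lindeberg condition is trivial: the deterministic bound $|d_k|\le 2/a_k\le 2$ together with $V_n\to\infty$ makes the truncated sum identically zero once $\sqrt{V_n}>2/\epsilon$. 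For the two-sided LIL I would apply the martingale LIL of Stout/Hall--Heyde; the required negligibility $d_k^2=o(V_n/\log\log V_n)$ holds again from $|d_k|=O(a_k^{-1})$ together with $V_n\to\infty$, and the asymptotic $V_n\sim(1-\varepsilon^2)w_n$ transfers the normalization from $\phi(V_n)$ to $\phi(w_n)$ with constant $\sqrt{1-\varepsilon^2}$.

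For Part (2), the bound $E[d_k^2\mid\mathcal{F}_{k-1}]\le 1/a_k^2$ gives $E[M_n^2]=\sum_{k=1}^n E[d_k^2]\le w_\infty<\infty$, so Doob's $L^2$-bounded martingale convergence theorem delivers $M_n\to M_\infty$ both $P$-a.s.\ and in $L^2(P)$, which is exactly \eqref{eq:lim-m} in view of the definition of $M_n$. Passing to the limit in $E[M_n]=0$ gives $E[M_\infty]=0$. For the non-triviality $P(M_\infty\ne 0)>0$, the orthogonality of martingale differences yields $E[M_\infty^2]=\sum_{k=1}^\infty E[d_k^2]\ge E[d_1^2]=4q(1-q)$, which is strictly positive whenever $q\in(0,1)$; at the boundary $q\in\{0,1\}$ the first step is deterministic, but then $E[d_2^2]=E[\mathrm{Var}(X_2\mid\mathcal{F}_1)]/a_2^2>0$ as soon as $\alpha_1<1$ and $\varepsilon_1<1$, which one reads off as the non-degeneracy condition.

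The main obstacle is verifying the conditional-variance asymptotic $V_n\sim(1-\varepsilon^2)w_n$ rigorously: one must promote the pointwise almost-sure convergence $E[X_k\mid\mathcal{F}_{k-1}]^2\to\varepsilon^2$ to a weighted-average statement against the divergent weights $1/a_k^2$. This step is where the restriction $\varepsilon<1$ in Part (1) becomes essential—if $\varepsilon=1$ the leading factor $1-\varepsilon^2$ collapses and $V_n$ is of smaller order than $w_n$, so the scaling $a_n\sqrt{w_n}$ no longer captures the true fluctuations. This is precisely the degenerate regime that motivates the refined analysis of Theorems \ref{thm:quad-var} and \ref{thm:fluct} in the remainder of the paper.
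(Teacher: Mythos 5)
Your proposal is correct, but it is worth noting that the paper does not actually prove this theorem at all: it states that Theorem \ref{thm:fluct-0} ``follows by \cite[Theorems 3 and 4]{WQY12}'' and omits the argument. What you have written is, in effect, the direct proof that the paper only supplies later for the degenerate analogue, Theorem \ref{thm:quad-var}: there too the author computes $E[d_k^2\mid{\cal F}_{k-1}]=(1-E[X_k\mid{\cal F}_{k-1}]^2)/a_k^2$, feeds in the law of large numbers to identify the limit of the conditional variances, and then invokes \cite[Corollary 3.1, Theorems 4.7 and 4.8]{HH80} (via \cite[Lemmas 3.4 and 3.5]{JJQ08}) for the CLT and LIL. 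So your route is both sound and consistent with the paper's own methodology; it buys a self-contained argument in place of an external citation. Two small points deserve attention. First, to apply the law of large numbers with $r_n=n$ you need $a_n/n\rightarrow 0$; this is not a separate hypothesis of part (1), but it does follow from $\sum_{n=1}^{\infty}1/a_n^2=\infty$ because $a_{n+1}/a_n=1+\alpha_n/n\leq (n+1)/n$ makes $a_n/n$ non-increasing, and a positive limit would force $\sum 1/a_n^2<\infty$; you should make this reduction explicit, and also record that $E[S_n]/n\rightarrow\varepsilon$ (Example \ref{exam:lln}, which uses the regular variation of $a_n$) is what converts $(S_n-E[S_n])/n\rightarrow 0$ into $S_n/n\rightarrow\varepsilon$. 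Second, your observation about $P(M_{\infty}\neq 0)>0$ is more careful than the statement itself: in the fully degenerate case (e.g.\ $q=1$ and $\alpha_n\equiv 1$, for which $\sum 1/a_n^2<\infty$ holds) one has $M_{\infty}=0$ a.s., so the nontriviality claim genuinely requires the kind of non-degeneracy condition you read off from $E[d_2^2]>0$; the paper leaves this implicit by deferring to \cite{WQY12}.
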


We omit the proof of Theorem \ref{thm:fluct-0} 
because this theorem follows by \cite[Theorems 3 and 4]{WQY12}.

\begin{rem}\label{rem:wn}\rm
Theorem \ref{thm:fluct-0}  says that if $\varepsilon\in [0,1)$ 
and $\alpha\ne 1/2$,
then the fluctuations of 
$\{\varepsilon_n\}_{n=1}^{\infty}$ and $\{\alpha_n\}_{n=1}^{\infty}$ 
do not essentially affect 
that of $\{S_n\}_{n=0}^{\infty}$ around the mean.
In fact, we have:
\begin{itemize}
\item If $0\leq \alpha<1/2$, then 
$a_n\sqrt{w_n}\sim \sqrt{n/(1-2\alpha)}$ 
so that $\{S_n\}_{n=1}^{\infty}$ exhibits the diffusive behavior. 
\item If $1/2<\alpha<1$, 
then $\{S_n\}_{n=1}^{\infty}$ exhibits the superdiffusive behavior 
because $a_n\sim n^{\alpha}l_n/\Gamma(\alpha+1)$ 
with the slowly varying sequence $\{l_n\}_{n=1}^{\infty}$ in \eqref{eq:slow-seq}.
\end{itemize}
On the other hand, if $\alpha=1/2$, then the behavior of $\{S_n\}_{n=1}^{\infty}$ depends on 
the convergence rate of $\{\alpha_n\}_{n=1}^{\infty}$ 
because this rate affects whether $\sum_{n=1}^{\infty}1/a_n^2$ is convergent or not 
(see Example \ref{exam:clt-1} below for details). 
This fact is already observed  in \cite{JJQ08} for  
the constant bias $\varepsilon_n=\varepsilon \ (n\geq 1)$, 
and in \cite{KT19} for the constant correlation $\alpha_n=\alpha \ (n\geq 1)$.
\end{rem}

We are now concerned with the fluctuation of $\{S_n\}_{n=1}^{\infty}$ around the mean  
for $\varepsilon=1$ and $\sum_{n=1}^{\infty}1/a_n^2=\infty$, 
which is not covered by Theorem \ref{thm:fluct-0}. 
Let us make the next assumptions on the sequences $\{\varepsilon_n\}_{n=1}^{\infty}$ and $\{\alpha_n\}_{n=1}^{\infty}$.
\begin{assum}\label{assum:alpha} \rm 
The sequences $\{\varepsilon_n\}_{n=1}^{\infty}$ and $\{\alpha_n\}_{n=1}^{\infty}$ 
satisfy the next conditions:
\begin{enumerate}
\item[(i)] $\varepsilon=1$ and $\sum_{n=1}^{\infty}1/a_n^2=\infty$.
\item[(ii)] The sequence $\{1-\varepsilon_n\}_{n=1}^{\infty}$ is regularly varying 
with index $-\rho$ for some $\rho\in [0,1/2)$.
\end{enumerate}
\end{assum}

Let $v_n=\sum_{k=1}^n(1-\varepsilon_k^2)/a_k^2$ and 
$$c_{\alpha,\rho}=\frac{(1-\alpha)(1-\rho)}{1-(\alpha+\rho)}.$$

\begin{thm}\label{thm:quad-var} 
If Assumption {\rm \ref{assum:alpha}} is fulfilled, then the next assertions hold{\rm :}
\begin{enumerate}
\item[{\rm (1)}] 
If $\sum_{n=1}^{\infty}(1-\varepsilon_n)/a_n^2=\infty$, then
\begin{equation}\label{eq:quad-asymp}
\sum_{k=1}^nE[d_k^2\mid {\cal F}_{k-1}]
\sim c_{\alpha,\rho}v_n.
\end{equation}
Moreover, 
$$\frac{S_n-E[S_n]}{a_n\sqrt{v_n}}\rightarrow N(0,c_{\alpha,\rho}) \quad \text{in distribution}$$
and 
$$\limsup_{n\rightarrow\infty}\pm \frac{S_n-E[S_n]}{a_n\phi(v_n)}=\sqrt{c_{\alpha,\rho}}, \quad \text{$P$-a.s.}$$
\item[{\rm (2)}] 
If $\sum_{n=1}^{\infty}(1-\varepsilon_n)/a_n^2<\infty$, 
then the same assertion as in Theorem {\rm \ref{thm:fluct-0} (2)} holds. 
\end{enumerate}
\end{thm}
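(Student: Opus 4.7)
The plan is to apply the martingale CLT and LIL to $\{M_n\}_{n=1}^\infty$ from Lemma \ref{lem:moment}(3); the heart of the argument is establishing \eqref{eq:quad-asymp} for the conditional quadratic variation. Setting $Y_k:=E[X_k\mid{\cal F}_{k-1}]=(\alpha_{k-1}/(k-1))S_{k-1}+(1-\alpha_{k-1})\varepsilon_{k-1}$ and using $X_k^2=1$, formula \eqref{eq:difference} gives $E[d_k^2\mid{\cal F}_{k-1}]=(1-Y_k^2)/a_k^2$. Writing $T_n:=n-S_n$, a direct computation yields
$$1-Y_k=\frac{\alpha_{k-1}}{k-1}T_{k-1}+(1-\alpha_{k-1})(1-\varepsilon_{k-1}),\qquad 1+Y_k=2-(1-Y_k).$$
Under Assumption \ref{assum:alpha} both summands in $1-Y_k$ tend to $0$ (since $\varepsilon_k\to 1$ and $T_n/n\to 0$), so $1+Y_k\to 2$; combined with $1-\varepsilon_k^2\sim 2(1-\varepsilon_k)$ this is exactly what produces the clean constant $c_{\alpha,\rho}$.

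Turning this into a rate requires the exact decay of $T_{k-1}/(k-1)$. The process $\{T_n\}$ satisfies $E[T_{n+1}\mid{\cal F}_n]=(1+\alpha_n/n)T_n+(1-\alpha_n)(1-\varepsilon_n)$, so it is itself a generalized elephant random walk, now with drift parameters $\{1-\varepsilon_n\}$. Applying Corollary \ref{cor:lln-0} to $\{T_n\}$ together with Karamata's theorem for the regularly varying sequence $\{1-\varepsilon_n\}$---the content I expect Lemma \ref{lem:lln-order}(2) to supply---yields
$$T_n\sim\frac{1-\alpha}{1-\alpha-\rho}\,n(1-\varepsilon_n),\quad P\text{-a.s.,}$$
where the constraint $\alpha+\rho<1$ is automatic under Assumption \ref{assum:alpha}: $\sum 1/a_n^2=\infty$ forces $\alpha\leq 1/2$ while $\rho<1/2$. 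Substituting back,
$$E[d_k^2\mid{\cal F}_{k-1}]\sim\frac{2(1-\varepsilon_{k-1})}{a_k^2}\left(\alpha\cdot\frac{1-\alpha}{1-\alpha-\rho}+(1-\alpha)\right)=\frac{2c_{\alpha,\rho}(1-\varepsilon_{k-1})}{a_k^2},$$
and summation---with the index shift $k-1\mapsto k$ absorbed by regular variation, and $1-\varepsilon_k^2\sim 2(1-\varepsilon_k)$---gives \eqref{eq:quad-asymp}.

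For (1), I then invoke the martingale CLT and LIL from Hall and Heyde \cite{HH80}. Since $|d_k|\leq 2/a_k\leq 2$ (as $a_k\geq a_1=1$) and $v_n\to\infty$ by the hypothesis $\sum(1-\varepsilon_n)/a_n^2=\infty$, the Lindeberg condition is trivially satisfied once $2/\sqrt{v_n}<\epsilon$, so $M_n/\sqrt{v_n}\to N(0,c_{\alpha,\rho})$ in distribution and multiplication by $a_n$ yields the CLT. The LIL follows from the corresponding martingale LIL applied to $\langle M\rangle_n\sim c_{\alpha,\rho}v_n$, converted to the claimed form via $\phi(c_{\alpha,\rho}v_n)\sim\sqrt{c_{\alpha,\rho}}\phi(v_n)$. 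For (2), the same computation gives $E[d_k^2]=O((1-\varepsilon_{k-1})/a_k^2)$, so $\sum E[d_k^2]<\infty$; $\{M_n\}$ is $L^2$-bounded, and the conclusion of Theorem \ref{thm:fluct-0}(2) follows from standard $L^2$-martingale convergence.

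The main obstacle is the decay rate for $T_n$: the crude bound $1-Y_k^2\leq 1$ is too coarse in the degenerate regime $\varepsilon=1$, so one must extract the second-order concentration of $T_n$---itself a random walk with vanishing drift $1-\varepsilon_n$---around its mean. This requires a second, careful application of Corollary \ref{cor:lln-0} in which verifying the hypotheses $a_n/r_n^T\to 0$ and $\sum 1/(r_n^T)^2<\infty$ is precisely where the parameter constraint $\alpha+\rho<1$, and hence the well-definedness of $c_{\alpha,\rho}$, naturally enters.
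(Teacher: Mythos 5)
Your proposal is correct, and its overall architecture coincides with the paper's: factor the conditional variance as $(1-Y_k)(1+Y_k)/a_k^2$, show $1+Y_k\to 2$ and $1-Y_k\sim c_{\alpha,\rho}(1-\varepsilon_k)$ via an almost sure rate for $1-S_n/n$, sum to obtain \eqref{eq:quad-asymp}, and invoke the martingale CLT and LIL (your bounded-increment Lindeberg argument is exactly what the paper outsources to \cite{JJQ08} and \cite{HH80}); part (2) is $L^2$-bounded martingale convergence in both treatments. The genuinely different ingredient is how you reach the rate $T_n\sim\frac{1-\alpha}{1-(\alpha+\rho)}\,n(1-\varepsilon_n)$, i.e.\ Lemma \ref{lem:lln-order} (2). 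The paper expands $E[S_n]/n-1$ from Lemma \ref{lem:moment} (1) into three pieces $A_n-B_n+C_n$ and must prove the technical estimate Lemma \ref{lem:lln-order} (1) --- controlled by Stirling-type asymptotics for $g_n$ and a summation-by-parts argument for $l_n$ --- to show $C_n=O(a_{n-1}/n)=o(1-\varepsilon_n)$. You instead note that $T_n=n-S_n$ obeys the exact recursion $E[T_{n+1}\mid{\cal F}_n]=(1+\alpha_n/n)T_n+(1-\alpha_n)(1-\varepsilon_n)$, so $E[T_n]=a_n\bigl(2(1-q)+\sum_{k=1}^{n-1}(1-\alpha_k)(1-\varepsilon_k)/a_{k+1}\bigr)$ holds exactly, and Karamata's theorem for the regularly varying summand (index $-(\alpha+\rho)>-1$, since $\sum 1/a_n^2=\infty$ forces $\alpha\le 1/2$ and $\rho<1/2$) plus Theorem \ref{thm:lln-0} with $r_n=n(1-\varepsilon_n)$ (whose hypotheses $a_n/r_n\to0$ and $\sum r_n^{-2}<\infty$ you correctly verify) give the rate. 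This bypasses Lemma \ref{lem:lln-order} (1) entirely --- the paper's estimate there is in effect an asymptotic version of the exact identity $n=a_n\bigl(1+\sum_{k=1}^{n-1}(1-\alpha_k)/a_{k+1}\bigr)$ --- and since that lemma serves no other purpose in the paper, your route is a genuine simplification at no loss of generality. Your treatment of part (2) via the deterministic bound $\sum E[d_k^2]<\infty$ is likewise marginally cleaner than the paper's use of a random constant bounding $\sum E[d_k^2\mid{\cal F}_{k-1}]$.
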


Theorem \ref{thm:quad-var}  reveals how  the convergence rate of the bias $\{\varepsilon_n\}_{n=1}^{\infty}$ affects 
the fluctuation of  $\{S_n\}_{n=0}^{\infty}$ around the mean. 
In particular, we find that $\rho=1-2\alpha$ is the borderline between (1) and (2).

We also note that if Assumption {\rm \ref{assum:alpha}} is fulfilled,  
then \eqref{eq:aslim-linear} and \eqref{eq:bernoulli} hold by Example \ref{exam:lln}. 
Therefore, if we assume in addition that  $\sum_{n=1}^{\infty}(1-\varepsilon_n)/a_n^2=\infty$, then 
$$v_n\sim \sum_{k=1}^n\frac{4}{a_k^2}P(X_k=1)(1-P(X_k=1))$$
and 
$$\lim_{n\rightarrow\infty}\frac{1}{v_n}\sum_{k=1}^{n-1}\frac{1}{a_k^2}= \infty.$$
Namely, we can not apply \cite[Theorems 3 and 4]{WQY12} to the proof of Theorem \ref{thm:quad-var}.

To prove Theorem \ref{thm:quad-var}, 
we find the convergence rate for Example \ref{exam:lln} with $\varepsilon=1$.
\begin{lem}\label{lem:lln-order}
Let Assumption {\rm \ref{assum:alpha}} hold. 
\begin{enumerate}
\item[{\rm (1)}] There exists a positive constant $c_0$ such that 
for any $n\geq 2$, 
$$
\left|(1-\alpha)\frac{a_{n-1}}{n}\sum_{k=1}^{n-1}\frac{1}{a_k}-1
-\frac{a_{n-1}}{n}\sum_{k=1}^{n-1}\frac{\alpha_k-\alpha}{a_k}\right|
\leq \frac{c_0a_{n-1}}{n}.
$$

\item[{\rm (2)}] 
The next assertion holds{\rm :}
$$\lim_{n\rightarrow\infty}\frac{1}{1-\varepsilon_n}\left(\frac{S_n}{n}-1\right)=-\frac{1-\alpha}{1-(\alpha+\rho)},  \ \text{$P$-a.s.}$$ 
\end{enumerate}
\end{lem}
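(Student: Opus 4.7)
The plan is to prove the two parts in sequence, with part (1) providing an algebraic identity that feeds directly into the proof of part (2).

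For part (1), I would first observe the algebraic simplification
\[
(1-\alpha)\sum_{k=1}^{n-1}\frac{1}{a_k}-\sum_{k=1}^{n-1}\frac{\alpha_k-\alpha}{a_k}
=\sum_{k=1}^{n-1}\frac{1-\alpha_k}{a_k},
\]
so the stated inequality reduces to controlling $\bigl|(a_{n-1}/n)\sum_{k=1}^{n-1}(1-\alpha_k)/a_k-1\bigr|$. The main tool is a telescoping relation: since $a_{k+1}=a_k(1+\alpha_k/k)$, a short computation gives
\[
\frac{k+1}{a_{k+1}}-\frac{k}{a_k}=\frac{1-\alpha_k}{a_{k+1}},
\]
whose telescoping yields $\sum_{k=1}^{n-1}(1-\alpha_k)/a_{k+1}=n/a_n-1$. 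Writing $1/a_k=(1+\alpha_k/k)/a_{k+1}$ converts this into $\sum_{k=1}^{n-1}(1-\alpha_k)/a_k=n/a_n-1+R_n$, where $R_n:=\sum_{k=1}^{n-1}\alpha_k(1-\alpha_k)/(ka_{k+1})$. A second telescoping identity $1/a_k-1/a_{k+1}=\alpha_k/(ka_{k+1})$ shows $R_n\le 1-1/a_n\le 1$ uniformly in $n$. Multiplying through by $a_{n-1}/n$ and subtracting $1$, each of the three resulting terms $a_{n-1}/a_n-1$, $-a_{n-1}/n$ and $a_{n-1}R_n/n$ is bounded by a universal constant times $a_{n-1}/n$ (using $a_{n-1}\ge a_1=1$).

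For part (2), I would decompose $S_n/n-1=(S_n-E[S_n])/n+(E[S_n]/n-1)$ and treat the two pieces separately. Using Lemma \ref{lem:moment} (1) and the substitution $\varepsilon_k=1-(1-\varepsilon_k)$, part (1) lets me rewrite the mean contribution as
\[
\frac{E[S_n]}{n}-1=\frac{a_n}{n}\bigl[(2q-2)+R_n\bigr]-T_n,\qquad T_n:=\frac{a_n}{n}\sum_{k=1}^{n-1}\frac{(1-\alpha_k)(1-\varepsilon_k)}{a_k}.
\]
Under Assumption \ref{assum:alpha}, $(1-\alpha_k)/a_k$ is regularly varying of index $-\alpha$ by Lemma \ref{lem:series} (4) and $1-\varepsilon_k$ is regularly varying of index $-\rho$; moreover $\sum 1/a_n^2=\infty$ forces $\alpha\le 1/2$, so $\alpha+\rho<1$. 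Karamata's theorem for sums of regularly varying sequences of index $>-1$ then gives $T_n\sim (1-\alpha)(1-\varepsilon_n)/(1-(\alpha+\rho))$, while the first term on the right is of order $a_n/n\asymp n^{\alpha-1}$, which is of smaller order than $n^{-\rho}\asymp 1-\varepsilon_n$. Dividing by $1-\varepsilon_n$ therefore yields $(1-\varepsilon_n)^{-1}(E[S_n]/n-1)\to -(1-\alpha)/(1-(\alpha+\rho))$.

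For the fluctuation term, I would apply Theorem \ref{thm:lln-0} with $r_n:=n(1-\varepsilon_n)$. Regular variation of $1-\varepsilon_n$ of index $-\rho$ together with $\alpha+\rho<1$ gives $a_n/r_n\to 0$, while $\rho<1/2$ gives $\sum 1/r_n^2<\infty$ (the summands are regularly varying of index $2\rho-2<-1$). Hence $(S_n-E[S_n])/(n(1-\varepsilon_n))\to 0$ almost surely, and combining with the mean asymptotic completes the proof. The main technical obstacle I anticipate is the clean application of Karamata's theorem to $T_n$: this requires the discrete form of Karamata for a product of two regularly varying sequences carrying slowly varying factors (from both $a_n$ and $\varepsilon_n$), and some care is needed to ensure these factors combine correctly. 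The remainder of the argument is relatively mechanical, relying on the telescoping identities established in part (1) and the previously proved law of large numbers.
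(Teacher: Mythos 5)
Your proposal is correct, and part (1) is where you genuinely diverge from the paper. The paper proves (1) by writing $a_n=g_nl_n$, invoking the second-order Stirling asymptotic \eqref{eq:prod-asymp} for $g_n$, and splitting the quantity into three pieces ${\rm (I)}+{\rm (II)}+{\rm (III)}$ that are estimated separately via comparisons of $\sum k^{-\alpha}/l_k$ with telescoped differences $k^{1-\alpha}/l_k-(k-1)^{1-\alpha}/l_{k-1}$; this is several pages of asymptotic analysis and uses the convergence $\alpha_n\to\alpha$ throughout. Your route first collapses the expression algebraically to $\bigl|(a_{n-1}/n)\sum_{k=1}^{n-1}(1-\alpha_k)/a_k-1\bigr|$ and then exploits the exact telescoping identities $\frac{k+1}{a_{k+1}}-\frac{k}{a_k}=\frac{1-\alpha_k}{a_{k+1}}$ and $\frac{1}{a_k}-\frac{1}{a_{k+1}}=\frac{\alpha_k}{ka_{k+1}}$ (both of which I checked) to get the exact formula $\sum_{k=1}^{n-1}(1-\alpha_k)/a_k=n/a_n-1+R_n$ with $0\le R_n\le 1$; the bound then follows from $a_{n-1}\ge 1$ and $0\le a_n/a_{n-1}-1\le 1/(n-1)$. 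This is shorter, entirely elementary, and in fact does not use Assumption \ref{assum:alpha} or even the convergence of $\{\alpha_n\}$, so it proves (1) in greater generality; what the paper's computation buys in exchange is explicit control of each error source through the $g_n$, $l_n$ decomposition, which it reuses elsewhere. For part (2) your argument coincides with the paper's: the same three-term decomposition of $E[S_n]/n-1$ (your $a_n(2q-2+R_n)/n$ and $T_n$ are the paper's $A_n+C_n$ and $B_n$), the same Karamata asymptotic $T_n\sim(1-\alpha)(1-\varepsilon_n)/(1-(\alpha+\rho))$ using $\alpha+\rho<1$ (which, as you note, follows from $\alpha\le 1/2$ forced by $\sum 1/a_n^2=\infty$ together with $\rho<1/2$), and the same application of Theorem \ref{thm:lln-0} with $r_n=n(1-\varepsilon_n)$ for the fluctuation term.
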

\begin{proof}
We first prove (1). Let 
\begin{equation}\label{eq:sum-small}
\begin{split}
&(1-\alpha)\frac{a_{n-1}}{n}\sum_{k=1}^{n-1}\frac{1}{a_k}-1-\frac{a_{n-1}}{n}\sum_{k=1}^{n-1}\frac{\alpha_k-\alpha}{a_k}
= (1-\alpha)\frac{a_{n-1}}{n}
\left\{\sum_{k=1}^{n-1}\left(\frac{1}{g_k}-\frac{\Gamma(\alpha+1)}{k^{\alpha}}\right)\frac{1}{l_k}\right\}\\
&+\frac{a_{n-1}}{n}\left\{\Gamma(\alpha+1)\left((1-\alpha)\sum_{k=1}^{n-1}\frac{1}{k^{\alpha} l_k}-\frac{n^{1-\alpha}}{l_n}\right)
-\sum_{k=1}^{n-1}\frac{\alpha_k-\alpha}{a_k}\right\}
+\left(\Gamma(\alpha+1)\frac{a_{n-1}}{n^{\alpha}l_n}-1\right)\\
&={\rm (I)}+{\rm (II)}+{\rm (III)}.
\end{split}
\end{equation}
Since Lemma \ref{lem:series} (1) yields 
\begin{equation}\label{eq:asymp-inv}
\frac{1}{g_n}-\frac{\Gamma(\alpha+1)}{n^{\alpha}}
=-\frac{\Gamma(\alpha+1)}{n^{\alpha}g_n}\left(g_n-\frac{n^{\alpha}}{\Gamma(\alpha+1)}\right)
\sim \frac{c_1}{n^{1+\alpha}}
\end{equation}
for some $c_1>0$, we have
$$
\left|\sum_{k=1}^{n-1}\left(\frac{1}{g_k}-\frac{\Gamma(\alpha+1)}{k^{\alpha}}\right)\frac{1}{l_k}\right|
\leq \sum_{k=1}^{n-1}\left|\frac{1}{g_k}-\frac{\Gamma(\alpha+1)}{k^{\alpha}}\right|\frac{1}{l_k}
\leq c_2\sum_{k=1}^{\infty}\frac{1}{k^{1+\alpha}l_k}<\infty
$$
so that 
$$|{\rm (I)}|\leq c_3\frac{a_n}{n}.$$

For $k\geq 2$, 
since
$$
\frac{1-\alpha}{k^{\alpha}}\leq k^{1-\alpha}-(k-1)^{1-\alpha},
$$
we get 
\begin{equation*}
\begin{split}
\frac{1-\alpha}{k^{\alpha}l_k}
&\leq \left(\frac{k^{1-\alpha}}{l_k}-\frac{(k-1)^{1-\alpha}}{l_{k-1}}\right)
+(k-1)^{1-\alpha}\left(\frac{1}{l_{k-1}}-\frac{1}{l_k}\right)\\
&=\left(\frac{k^{1-\alpha}}{l_k}-\frac{(k-1)^{1-\alpha}}{l_{k-1}}\right)
+\frac{(k-1)^{1-\alpha}(\alpha_k-\alpha)}{(k+\alpha)l_k}.
\end{split}
\end{equation*}
Combining \eqref{eq:asymp-inv} with the relation
$$\frac{(k-1)^{1-\alpha}}{k+\alpha}-\frac{1}{k^{\alpha}}\sim -\frac{1-\alpha}{k^{\alpha+1}},$$
we have for some $c_4>0$,
$$\Gamma(\alpha+1)\frac{(k-1)^{1-\alpha}}{k+\alpha}-\frac{1}{g_k}
\sim -\frac{c_4}{k^{\alpha+1}}.$$
Therefore, we obtain for some $c_5>0$  and for any $n\geq 1$, 
$$
\Gamma(\alpha+1)(1-\alpha)\sum_{k=1}^n\frac{1}{k^{\alpha}l_k}
\leq c_5+\Gamma(\alpha+1)\frac{n^{1-\alpha}}{l_n}
+\sum_{k=1}^{n}\frac{\alpha_k-\alpha}{a_k}.
$$
In the same way, we have for some $c_6>0$ and for any $n\geq 1$,
$$\Gamma(\alpha+1)(1-\alpha)\sum_{k=1}^n\frac{1}{k^{\alpha}l_k}
\geq -c_6+\Gamma(\alpha+1)\frac{n^{1-\alpha}}{l_n}
+\sum_{k=1}^{n}\frac{\alpha_k-\alpha}{a_k}.$$
Hence there exists a positive constant $c_7$ such that for any $n\geq 1$,
$$|{\rm (II)}|
\leq \frac{c_7a_{n-1}}{n}.$$

By the triangle inequality and \eqref{eq:prod-asymp},
\begin{equation*}
\begin{split}
|{\rm (III)}|
&=\left|\frac{\Gamma(\alpha+1)g_{n-1}}{n^{\alpha}}\frac{l_{n-1}}{l_n}-1\right|\\
&\leq \frac{\Gamma(\alpha+1)}{n^{\alpha}}\frac{l_{n-1}}{l_n}\left|g_{n-1}-\frac{(n-1)^{\alpha}}{\Gamma(\alpha+1)}\right|
+\frac{l_{n-1}}{l_n}\left\{1-\left(1-\frac{1}{n}\right)^{\alpha}\right\}+\left|\frac{l_{n-1}}{l_n}-1\right|\\
&\leq \frac{c_8}{n}.
\end{split}
\end{equation*}
Combining the estimates of ${\rm (I)}$, ${\rm (II)}$, ${\rm (III)}$ above with \eqref{eq:sum-small},
we have (1).

We next prove (2). By Lemma \ref{lem:moment} (1), 
\begin{equation}\label{eq:exp-mean}
\begin{split}
&\frac{E[S_n]}{n}-1
=\frac{a_{n-1}}{n}\left(2q-1+\sum_{k=1}^{n-1}\frac{(1-\alpha_k)\varepsilon_k}{a_k}\right)-1\\
&=\frac{a_{n-1}}{n}(2q-1)-\frac{a_{n-1}}{n}\sum_{k=1}^{n-1}\frac{(1-\alpha_k)(1-\varepsilon_k)}{a_k}
+\left((1-\alpha)\frac{a_{n-1}}{n}\sum_{k=1}^{n-1}\frac{1}{a_k}-1-\frac{a_{n-1}}{n}\sum_{k=1}^{n-1}\frac{\alpha_k-\alpha}{a_k}\right)\\
&=A_n-B_n+C_n.
\end{split}
\end{equation}
Since $\alpha+\rho<1$ by assumption, we have 
$$\lim_{n\rightarrow\infty}\frac{A_n}{1-\varepsilon_n}=0, \quad 
\lim_{n\rightarrow\infty}\frac{B_n}{1-\varepsilon_n}=\frac{1-\alpha}{1-(\alpha+\rho)}.$$
By (1), we also get 
$$\lim_{n\rightarrow\infty}\frac{C_n}{1-\varepsilon_n}=0$$
so that 
$$\lim_{n\rightarrow\infty}\frac{1}{1-\varepsilon_n}\left(\frac{E[S_n]}{n}-1\right)
=-\frac{1-\alpha}{1-(\alpha+\rho)}.$$
On the other hand, 
since $\rho<1/2$ by assumption, 
we can apply Theorem \ref{thm:lln-0} with $r_n=n(1-\varepsilon_n)$ 
to obtain 
\begin{equation}\label{eq:lim-rate}
\lim_{n\rightarrow\infty}\frac{1}{1-\varepsilon_n}\left(\frac{S_n-E[S_n]}{n}\right)=0, \quad \text{$P$-a.s.}
\end{equation}
These two expressions above imply the desired assertion.
\end{proof}

\begin{proof}[Proof of Theorem {\rm \ref{thm:quad-var}}]
Since $X_n^2=1$ for any $n\in {\mathbb N}$, we have by Lemma \ref{lem:moment} (1),
\begin{equation}\label{eq:cond-var}
\begin{split}
&E[(X_j-E[X_j\mid {\cal F}_{j-1}])^2\mid {\cal F}_{j-1}]
=1-\left(\frac{\alpha_{j-1}}{j-1}S_{j-1}+(1-\alpha_{j-1})\varepsilon_{j-1}\right)^2\\
&=\left\{\alpha_{j-1}\left(1-\frac{S_{j-1}}{j-1}\right)+(1-\alpha_{j-1})(1-\varepsilon_{j-1})\right\}
\left\{1+\alpha_{j-1}\frac{S_{j-1}}{j-1}+(1-\alpha_{j-1})\varepsilon_{j-1}\right\}.
\end{split}
\end{equation}
Then by Lemma \ref{lem:lln-order} (2) and Example \ref{exam:lln} (i),
\begin{equation*}
\begin{split}
\alpha_{j-1}\left(1-\frac{S_{j-1}}{j-1}\right)+(1-\alpha_{j-1})(1-\varepsilon_{j-1})
&\sim \frac{\alpha(1-\alpha)}{1-(\alpha+\rho)}(1-\varepsilon_j)+(1-\alpha)(1-\varepsilon_j)\\
&=c_{\alpha,\rho}(1-\varepsilon_j)
\end{split}
\end{equation*}
and 
$$1+\alpha_{j-1}\frac{S_{j-1}}{j-1}+(1-\alpha_{j-1})\varepsilon_{j-1}\sim 2\sim 1+\varepsilon_j.$$
We thus have as $j\rightarrow\infty$,
\begin{equation}\label{eq:sim-quad}
E[(X_j-E[X_j\mid {\cal F}_{j-1}])^2\mid {\cal F}_{j-1}]
\sim c_{\alpha,\rho}(1-\varepsilon_j^2).
\end{equation}

If $\sum_{n=1}^{\infty}(1-\varepsilon_n)/a_n^2=\infty$, 
then \eqref{eq:sim-quad} implies that as $n\rightarrow\infty$, 
$$\sum_{j=1}^nE[d_j^2\mid {\cal F}_{j-1}]
=\sum_{j=1}^n \frac{1}{a_j^2}E[(X_j-E[X_j\mid {\cal F}_{j-1}])^2\mid {\cal F}_{j-1}]
\sim c_{\alpha,\rho}\sum_{j=1}^n\frac{1-\varepsilon_j^2}{a_j^2}=c_{\alpha,\rho}v_n,$$
that is, \eqref{eq:quad-asymp} holds.
Hence we get (1) by \cite[Lemmas 3.4 and 3.5]{JJQ08}, 
which follow from \cite[Corollary 3.1, Theorems 4.7 and 4.8]{HH80}.

On the other hand, if $\sum_{n=1}^{\infty}(1-\varepsilon_n)/a_n^2<\infty$, 
then by \eqref{eq:cond-var} and Lemma \ref{lem:lln-order} (2), 
we have for some random positive constant $C$, 
$$\sum_{n=1}^{\infty}E[d_n^2\mid {\cal F}_{n-1}]
\leq C\sum_{n=1}^{\infty}\frac{1-\varepsilon_n}{a_n^2}<\infty.$$
Hence by \cite[Theorem 2.15]{HH80}, 
$M_{\infty}:=\sum_{n=1}^{\infty}(M_n-M_{n-1})$ exists $P$-a.s.\ and 
$M_n\rightarrow M_{\infty}$ in $L^2(P)$.
This yields
$$\lim_{n\rightarrow\infty}\frac{S_n-E[S_n]}{a_n}
=\lim_{n\rightarrow\infty}M_n=M_{\infty}, \ \text{$P$-a.s.\ and in $L^2(P)$}$$
and 
$$E[M_{\infty}]=\lim_{n\rightarrow\infty}E[M_n]=0, 
\quad E[M_{\infty}^2]=\sum_{n=1}^{\infty}E[(M_n-M_{n-1})^2]\in (0,\infty).$$
The proof is complete.
\end{proof}

\begin{rem}\rm 
In the proof of Theorem \ref{thm:quad-var}, we used the condition $\rho<1/2$ in Assumption \ref{assum:alpha} 
only for the validity of \eqref{eq:lim-rate}; 
the rest of the proof is still valid for $\rho<1-\alpha$. 
At present, we do not know if the statement of Theorem \ref{thm:quad-var} is true or not 
for $\rho<1-\alpha$.
\end{rem}

\subsection{Growth exponent}
In this subsection, we discuss again the growth exponent of $\{S_n\}_{n=0}^{\infty}$ 
to interpolate Corollary \ref{cor:lln-0} and Theorem \ref{thm:conv-limit}. 
Let $w_n=\sum_{k=1}^n1/a_k^2$ and $v_n=\sum_{k=1}^n(1-\varepsilon_k^2)/a_k^2$. 
As a corollary of Theorems \ref{thm:fluct-0} and \ref{thm:quad-var}, 
we have 
\begin{cor}\label{cor:conv-limit-1}
\begin{enumerate}
\item[{\rm (1)}] Assume that $\varepsilon=0$, $\sum_{n=1}^{\infty}1/a_n^2=\infty$ 
and the following limit exists as a finite value{\rm :}
\begin{equation}\label{eq:kappa}
\mu:=\lim_{n\rightarrow\infty}\frac{1-\alpha}{\sqrt{w_n}}\sum_{k=1}^n\frac{\varepsilon_k}{a_k}. 
\end{equation}
Then 
\begin{equation}\label{eq:growth-dist}
\frac{S_n}{a_n\sqrt{w_n}}\rightarrow N(\mu,1) \quad \text{in distribution}.
\end{equation}
\item[{\rm (2)}] If 
\begin{equation}\label{eq:var-dom}
\sum_{n=1}^{\infty}\frac{(1-\alpha_n)\varepsilon_n}{a_n}<\infty,
\end{equation}
then $E[S_n]\sim c_*a_n$ with 
$$c_*=2q-1+\sum_{n=1}^{\infty}\frac{(1-\alpha_n)\varepsilon_n}{a_n}.$$
Moreover, if $\sum_{n=1}^{\infty}1/a_n^2<\infty$, 
or if Assumption {\rm \ref{assum:alpha}} is fulfilled and  $\sum_{n=1}^{\infty}(1-\varepsilon_n)/a_n^2<\infty$, 
then 
\begin{equation}\label{eq:mtg-conv}
\frac{S_n}{a_n}\rightarrow M_{\infty}+c_* \quad \text{$P$-a.s.\ and in $L^2(P)$.}
\end{equation}
\end{enumerate}
\end{cor}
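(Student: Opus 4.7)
The plan is to decompose $S_n = (S_n - E[S_n]) + E[S_n]$, control the deterministic piece via Lemma \ref{lem:moment}(1), and invoke the previously established martingale limit theorems (Theorems \ref{thm:fluct-0} and \ref{thm:quad-var}) for the fluctuation piece.

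For part (1), I would divide the decomposition by $a_n\sqrt{w_n}$. The hypotheses $\varepsilon = 0$ and $\sum 1/a_n^2 = \infty$ place us in the regime of Theorem \ref{thm:fluct-0}(1), which yields $(S_n - E[S_n])/(a_n\sqrt{w_n}) \to N(0,1)$ in distribution. For the deterministic part, Lemma \ref{lem:moment}(1) rewrites $E[S_n]/(a_n\sqrt{w_n})$ as $(2q-1)/\sqrt{w_n}$ (which vanishes since $w_n \to \infty$) plus $(1/\sqrt{w_n})\sum_{k=1}^{n-1}(1-\alpha_k)\varepsilon_k/a_k$. I would identify the limit of the latter with $\mu$ by a Toeplitz-type estimate: the difference
$$\frac{1}{\sqrt{w_n}}\sum_{k=1}^{n-1}\frac{(\alpha-\alpha_k)\varepsilon_k}{a_k}$$
tends to zero because $\alpha_k \to \alpha$ and, by hypothesis \eqref{eq:kappa}, $\sum_{k=1}^n \varepsilon_k/a_k$ is of order $\sqrt{w_n}$. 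Slutsky's theorem then combines both pieces into \eqref{eq:growth-dist}.

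For part (2), the absolute convergence assumed in \eqref{eq:var-dom} is exactly what is needed so that Lemma \ref{lem:moment}(1) yields $E[S_n]/a_n \to c_*$, which is the first assertion. For the almost sure and $L^2$ convergence of $S_n/a_n$, I would again split the ratio into fluctuation plus mean. Under $\sum 1/a_n^2 < \infty$, Theorem \ref{thm:fluct-0}(2) supplies $(S_n-E[S_n])/a_n \to M_\infty$ both $P$-a.s.\ and in $L^2(P)$. Under Assumption \ref{assum:alpha} together with $\sum(1-\varepsilon_n)/a_n^2 < \infty$, Theorem \ref{thm:quad-var}(2) provides the same conclusion. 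Adding the deterministic limit $c_*$ produces \eqref{eq:mtg-conv}.

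The main obstacle is the Toeplitz-style replacement in part (1): the hypothesis controls the weighted sum with constant weight $(1-\alpha)$, but the formula for $E[S_n]$ involves the variable weights $(1-\alpha_k)$. Making the substitution rigorous requires choosing $K$ with $|\alpha_k-\alpha|<\eta$ for $k > K$, splitting the sum at $K$, bounding the tail via \eqref{eq:kappa}, and letting $\eta\downarrow 0$; this is standard but somewhat delicate. The argument presupposes $\alpha < 1$, but per Remark \ref{rem:wn} the standing hypothesis $\sum 1/a_n^2 = \infty$ essentially forces $\alpha \in [0,1/2]$, so this is not a genuine restriction. Everything else amounts to bookkeeping on top of the cited theorems.
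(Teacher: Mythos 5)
Your proposal is correct and follows essentially the same route as the paper: decompose $S_n$ into $(S_n-E[S_n])+E[S_n]$, identify the limit of the deterministic part via Lemma \ref{lem:moment} (1), and invoke Theorems \ref{thm:fluct-0} and \ref{thm:quad-var} for the fluctuation. The paper simply asserts that Lemma \ref{lem:moment} (1) gives $E[S_n]/(a_n\sqrt{w_n})\rightarrow\mu$, whereas you correctly flag and sketch the replacement of the weights $1-\alpha_k$ by $1-\alpha$; that is a detail the paper leaves implicit, not a different argument.
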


\begin{proof}
Under the assumption of (1), Lemma \ref{lem:moment} (1) yields 
$$\lim_{n\rightarrow\infty}\frac{E[S_n]}{a_n\sqrt{w_n}}=\mu.$$ 
Then (1) follows by Theorem \ref{thm:fluct-0}.
If \eqref{eq:var-dom} is fulfilled, then by Lemma \ref{lem:moment} (1), $E[S_n]\sim c_*a_n$.   
Therefore, the proof is complete by Theorems \ref{thm:fluct-0} (2) and \ref{thm:quad-var}. 
\end{proof}

Note that if $0<\varepsilon\leq 1$ and $\sum_{n=1}^{\infty}1/a_n^2=\infty$, 
then $0\leq \alpha\leq 1/2$ so that $\mu=\infty$ in \eqref{eq:kappa}.

\begin{exam}\label{exam:l2-lln}\rm 
Let $0<\varepsilon\leq 1$. 
If $0\leq \alpha<1$, or if $\alpha=1$ and $\sum_{n=1}^{\infty}{(1-\alpha_n)}/n=\infty$, 
then by Example \ref{exam:lln}, $S_n/n\rightarrow \varepsilon,$ $P$-a.s.\ and in $L^2(P)$.
If $\alpha=1$ and  $\sum_{n=1}^{\infty}{(1-\alpha_n)}/n<\infty$, 
then $\sum_{n=1}^{\infty}(1-\alpha_n)/a_n<\infty$ by Lemma \ref{lem:series} (5). 
Hence \eqref{eq:mtg-conv} holds by Corollary \ref{cor:conv-limit-1} (2). 
\end{exam}

\begin{exam}\label{exam:l2-lln-1}\rm
Let $\varepsilon=0$ and $0\leq\alpha<1$. 
Assume that the sequence $\{\varepsilon_n\}_{n=1}^{\infty}$ is regularly varying 
with index $-\rho$ for some $\rho\geq 0$. 

\noindent
(i) Let $\alpha+\rho<1$. 
We first assume that 
\begin{itemize}
\item$1/2<\rho<1-\alpha$.
\end{itemize}
Since $\mu=0$, we have by Corollary \ref{cor:conv-limit-1} (1),
$$\frac{S_n}{\sqrt{n}}\rightarrow 
N\left(0,\frac{1}{1-2\alpha}\right) 
\quad \text{in distribution}. $$ 
We next assume that 
\begin{itemize}
\item $0<\alpha<1/2$ and $\rho=1/2$, and  
the limit  
$\mu_0:=\lim_{n\rightarrow\infty}\sqrt{n}\varepsilon_n$ exists as a finite value. 
\end{itemize}
Let $\mu$ be as in  \eqref{eq:kappa}. Since
$$\mu=\frac{\mu_0\sqrt{1-2\alpha}(1-\alpha)}{1-(\alpha+\rho)}, $$
Corollary \ref{cor:conv-limit-1} (1) yields 
$$\frac{S_n}{\sqrt{n}}\rightarrow 
N\left(\frac{\mu_0\sqrt{1-2\alpha}(1-\alpha)}{1-(\alpha+\rho)},\frac{1}{1-2\alpha}\right) 
\quad \text{in distribution}. $$ 

\noindent
(ii) Let $\alpha+\rho>1$.  Then \eqref{eq:var-dom} is valid. 
Moreover, if $\sum_{n=1}^{\infty}{1/a_n^2}=\infty$, 
then $\mu=0$ so that by Corollary \ref{cor:conv-limit-1} (1),
$$\frac{S_n}{a_n\sqrt{w_n}}\rightarrow N(0,1) \quad \text{in distribution}.$$
On the other hand, if $\sum_{n=1}^{\infty}{1/a_n^2}<\infty$, 
then Corollary \ref{cor:conv-limit-1} (2) yields  
\eqref{eq:mtg-conv}. 
\end{exam}

\begin{exam}\label{exam:border}\rm 
For some constants $\eta$, $\kappa$ and $\theta>0$, let 
$$\varepsilon_n=\frac{(\log n)^{\eta-1}}{n^{1-\alpha}}, 
\quad \alpha_n=\alpha+\frac{\kappa}{(\log n)^{\theta}} \ (n\geq 2).$$
If $0\leq \alpha<1/2$, then $\sum_{n=1}^{\infty}1/a_n^2=\infty$ and $\mu=0$.
Hence by Corollary \ref{cor:conv-limit-1} (1), 
$$\frac{S_n}{\sqrt{n}}\rightarrow N\left(0,\frac{1}{1-2\alpha}\right) \quad \text{in distribution}.$$
In what follows, we assume that $1/2\leq \alpha<1$. 
See Examples \ref{exam:order-l-a-1} and \ref{exam:order-l-a-2} 
for the asymptotic behaviors of $a_n$ and $a_n\sqrt{w_n}$.

\noindent
(i) Let $0<\theta<1$ and $\kappa\ne 0$. 
Then Corollary \ref{cor:conv-limit-1} implies the following: if
$\alpha=1/2$, $\kappa<0$, and $\eta+\theta/2<1$, then  
$$\frac{S_n}{a_n\sqrt{w_n}}\rightarrow N(0,1)\quad \text{in distribution}{\rm ;}$$
if $\alpha=1/2$, $\kappa<0$ and $\eta+\theta/2=1$,
then
$$\frac{S_n}{a_n\sqrt{w_n}}\rightarrow N\left(\frac{1}{\sqrt{-2\kappa}},1\right) \quad \text{in distribution}.$$
On the other hand, if $1/2\leq \alpha<1$ and $\kappa>0$,
then by Corollary \ref{cor:conv-limit-1} (2), 
\eqref{eq:mtg-conv} holds. 

\noindent
(ii) Let $\theta=1$. 
Then Corollary \ref{cor:conv-limit-1} implies the following: if
$\alpha=1/2$, $\kappa<1/2$, and $\eta<1/2$, then  
$$\frac{S_n}{a_n\sqrt{w_n}}\rightarrow N(0,1)\quad \text{in distribution}{\rm ;}$$
if $\alpha=1/2$, $\kappa<1/2$ and $\eta=1/2$,
then
$$\frac{S_n}{a_n\sqrt{w_n}}\rightarrow N\left(\frac{1}{\sqrt{1-2\kappa}},1\right) \quad \text{in distribution}.$$
On the other hand, if 
\begin{itemize}
\item $\alpha=1/2$ and $\kappa>\eta \vee (1/2)$, or 
\item $1/2<\alpha<1$ and $\eta<0$,
\end{itemize}
then  by Corollary \ref{cor:conv-limit-1} (2), \eqref{eq:mtg-conv} holds. 

\noindent
(iii) Let $\theta>1$ or $\kappa=0$. 
Then by Corollary \ref{cor:conv-limit-1}, if
$\alpha=1/2$ and $\eta<1/2$, then  
$$\frac{S_n}{a_n\sqrt{w_n}}\rightarrow N(0,1)\quad \text{in distribution}{\rm ;}$$
if $\alpha=1/2$ and $\eta=1/2$,
then
$$\frac{S_n}{a_n\sqrt{w_n}}\rightarrow N(1,1) \quad \text{in distribution}.$$
On the other hand, if 
$1/2<\alpha<1$ and $\eta<0$, 
then \eqref{eq:mtg-conv} holds by Corollary \ref{cor:conv-limit-1} (2).

Combining (i)--(iii) above with Example \ref{exam:0-case}, 
we can draw graphs
describing the phase transition on the growth exponent of $\{S_n\}_{n=0}^{\infty}$ 
in terms of $\kappa$ and $\eta$. 
We here present the graphs for $\alpha=1/2$ in Figures \ref{figure:a}, \ref{figure:b} and \ref{figure:c}. 
The colors of the graphs correspond to the equation numbers respectively as we see in Figures \ref{figure:a}.
See Example \ref{exam:order-l-a-2} for the behavior of $a_n\sqrt{w_n}$ in (i)--(iii).

\begin{figure}[H]
\begin{tabular}{ccc}
\begin{minipage}[t]{0.32\hsize}
\includegraphics{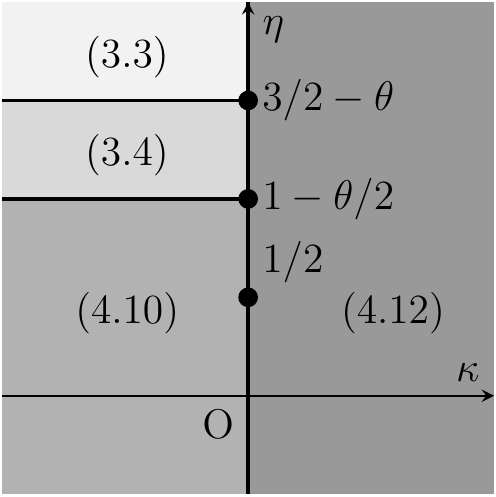}
\caption{$0<\theta<1$ and $\alpha=1/2$}\label{figure:a}
\end{minipage}
\begin{minipage}[t]{0.32\hsize}
\includegraphics{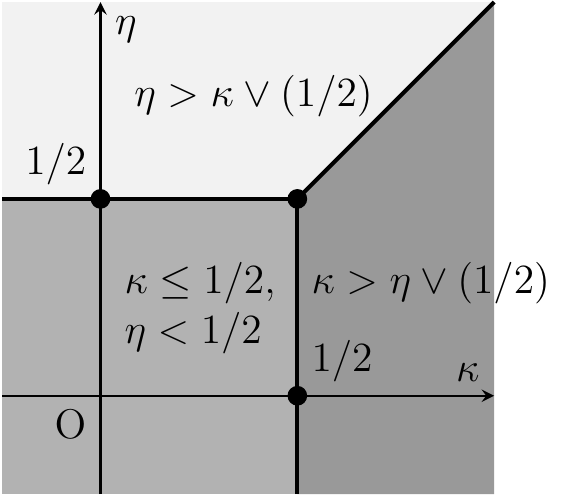}
\caption{$\theta=1$ and $\alpha=1/2$}\label{figure:b}
  \end{minipage} 
\begin{minipage}[t]{0.32\hsize}
\includegraphics{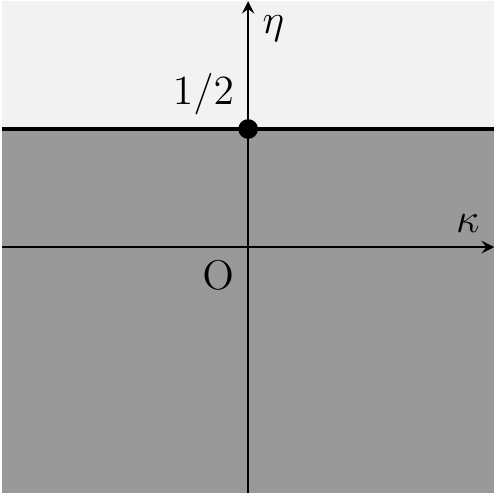}
\caption{$\theta>1$ and $\alpha=1/2$}\label{figure:c}
  \end{minipage}
\end{tabular}
\end{figure}
\end{exam}

\begin{exam}\rm 
For some positive constants $\eta$, $\kappa$ and $\theta$, let 
$$\varepsilon_n=\frac{1}{(\log n)^{\eta}}, \quad \alpha_n=1-\frac{\kappa}{(\log n)^{\theta}} \ (n\geq 2).$$
Let 
$$r_n=a_n\sum_{k=1}^n\frac{(1-\alpha_k)\varepsilon_k}{a_k}.$$
We saw in Example \ref{exam:lln-2} that if $0<\theta<1$, or if $\theta=1$ and $\kappa\geq \eta$,  then 
$S_n/r_n\rightarrow 1$, $P$-a.s.\ and in $L^2(P)$.
On the other hand, if $\theta=1$ and $\eta>\kappa$, or if $\theta>1$, 
then $\sum_{n=1}^{\infty}(1-\alpha_n)\varepsilon_n/a_n<\infty$ 
by Example \ref{exam:order-l-a-1}. 
Since $\sum_{n=1}^{\infty}1/a_n^2<\infty$, 
Corollary \ref{cor:conv-limit-1} (2) yields
\eqref{eq:mtg-conv}. 
\end{exam}

\subsection{Gaussian fluctuation around the random drift}
Let $z_n=\sum_{k=n}^{\infty} 1/a_k^2$ and $t_n=\sum_{k=n}^{\infty}(1-\varepsilon_k^2)/a_k^2$. 
Let $\psi(t)=\sqrt{2t\log|\log t|}$. 
\begin{thm}\label{thm:fluct}
\begin{enumerate}
\item[{\rm (1)}] If $0\leq \varepsilon<1$ and $\sum_{n=1}^{\infty}1/a_n^2<\infty$,
then
$$
\frac{S_n-E[S_n]-a_nM_{\infty}}{a_n\sqrt{z_n}}\rightarrow N(0,1-\varepsilon^2) \quad  \text{in distribution}
$$
and 
$$
\limsup_{n\rightarrow\infty}\pm \frac{S_n-E[S_n]-a_nM_{\infty}}{a_n\psi(z_n)}=\sqrt{1-\varepsilon^2}, \quad \text{$P$-a.s.}
$$
\item[{\rm (2)}] Assume that $\varepsilon=1$ and 
\begin{enumerate}
\item[{\rm (a)}] $\sum_{n=1}^{\infty}1/a_n^2<\infty$ and the sequence $\{1-\varepsilon_n\}_{n=1}^{\infty}$ 
is regularly variyng with index $-\rho$ for some $\rho\geq 0$, or 
\item[{\rm (b)}] Assumption {\rm \ref{assum:alpha}} is fulfilled 
and $\sum_{n=1}^{\infty}(1-\varepsilon_n)/a_n^2<\infty$.
\end{enumerate}
Then 
\begin{equation}\label{eq:fluct-clt}
\frac{S_n-E[S_n]-a_nM_{\infty}}{a_n\sqrt{t_n}}\rightarrow N(0,c_{\alpha,\rho}) \quad  \text{in distribution}
\end{equation}
and 
\begin{equation}\label{eq:fluct-lil}
\limsup_{n\rightarrow\infty}\pm \frac{S_n-E[S_n]-a_nM_{\infty}}{a_n\psi(t_n)}
=\sqrt{c_{\alpha,\rho}}, \quad \text{$P$-a.s.}
\end{equation}
\end{enumerate}
\end{thm}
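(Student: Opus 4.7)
The plan is to exploit the identity
\[
S_n - E[S_n] - a_n M_\infty = a_n(M_n - M_\infty) = -a_n \sum_{k=n+1}^\infty d_k,
\]
which reduces both assertions to a central limit theorem and a law of the iterated logarithm for the tail of the square integrable martingale $\{M_n\}$. The almost sure and $L^2$ convergence of $M_n$ to $M_\infty$ is supplied by Theorem \ref{thm:fluct-0} (2) in cases (1) and (2)(a), and by Theorem \ref{thm:quad-var} (2) in case (2)(b).

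The principal step is to determine the asymptotic behavior of the tail conditional variance
\[
V_n := \sum_{k=n+1}^\infty E[d_k^2 \mid {\cal F}_{k-1}].
\]
Using the expression derived in the proof of Theorem \ref{thm:quad-var},
\[
a_k^2 E[d_k^2 \mid {\cal F}_{k-1}]
= \left\{\alpha_{k-1}\left(1 - \frac{S_{k-1}}{k-1}\right) + (1-\alpha_{k-1})(1-\varepsilon_{k-1})\right\}
\left\{1 + \alpha_{k-1}\frac{S_{k-1}}{k-1} + (1-\alpha_{k-1})\varepsilon_{k-1}\right\},
\]
combined with the limit $S_n/n \to \varepsilon$ from Example \ref{exam:lln} in case (1), and with the sharper rate $1 - S_n/n \sim \{(1-\alpha)/(1-(\alpha+\rho))\}(1-\varepsilon_n)$ from Lemma \ref{lem:lln-order} (2) in case (2)(b), one obtains
\[
a_k^2 E[d_k^2 \mid {\cal F}_{k-1}] \to 1-\varepsilon^2 \ (\text{case (1)}), \qquad
a_k^2 E[d_k^2 \mid {\cal F}_{k-1}] \sim c_{\alpha,\rho}(1-\varepsilon_k^2) \ (\text{case (2)}),
\]
and hence $V_n \sim (1-\varepsilon^2) z_n$ in case (1) and $V_n \sim c_{\alpha,\rho} t_n$ in case (2). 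In case (2)(a), the necessary rate of $1 - S_n/n$ must be derived separately, since Assumption \ref{assum:alpha} is not in force; here the summability $\sum 1/a_n^2 < \infty$ together with the regular variation of $\{1-\varepsilon_n\}$ should suffice.

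The final step is to invoke a central limit theorem and law of the iterated logarithm for the tail of a convergent square integrable martingale. Since $|d_k| \leq 2/a_k$ and the conditional variances are P-a.s.\ summable, the tail Lindeberg condition holds automatically; applying the standard martingale limit theorems of \cite{HH80} to the partial sums $\{\sum_{k=n+1}^N d_k\}_{N > n}$ and letting $N \to \infty$ (equivalently, via a time-reversal of the filtration $\{{\cal F}_k\}_{k > n}$) yields
\[
\frac{M_\infty - M_n}{\sqrt{V_n}} \to N(0,1) \ \text{in distribution,} \qquad \limsup_{n \to \infty} \pm \frac{M_\infty - M_n}{\sqrt{2V_n \log|\log V_n|}} = 1 \ \text{$P$-a.s.}
\]
Substituting the asymptotics of the previous step and noting that the factor $a_n$ cancels (and that $\log|\log V_n| \sim \log|\log z_n|$ or $\log|\log t_n|$ as $V_n \downarrow 0$) gives \eqref{eq:fluct-clt} and \eqref{eq:fluct-lil}.

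The main obstacle is the clean passage from the usual forward-time martingale limit theorems to their tail versions, especially for the LIL, where P-a.s.\ (and not merely probabilistic) convergence of $V_n/z_n$ or $V_n/t_n$ is required; this relies crucially on the strong-law inputs from Example \ref{exam:lln} and Lemma \ref{lem:lln-order} (2). A secondary, smaller difficulty is obtaining the $(1 - S_n/n)$-rate in case (2)(a) outside the scope of Assumption \ref{assum:alpha}.
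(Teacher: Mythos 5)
Your proposal is correct and follows essentially the same route as the paper: the same reduction to the tail $a_n(M_n-M_\infty)$, the same asymptotics for the tail conditional variance via \eqref{eq:sim-quad} and Lemma \ref{lem:lln-order} (2), and the same appeal to martingale tail limit theorems. The ``tail versions'' you flag as the main obstacle are exactly \cite[Corollary 3.5 and Theorem 4.9]{HH80}, which the paper invokes after verifying their hypotheses (a fourth-moment bound from $|d_n|\leq 2/a_n$ and $a_n^2t_n\geq c_1n(1-\varepsilon_n)$, plus an auxiliary martingale argument giving $\sigma_n^{-2}\sum_{k\geq n}d_k^2\rightarrow 1$ $P$-a.s.).
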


\begin{proof}
We show (2) under the condition (b) only 
because the rest of the assertions is proved in a similar  way. 
We take the same approach as the proof of \cite[Theorem 3]{KT19}. 
Let Assumption {\rm \ref{assum:alpha}} hold and $\sum_{n=1}^{\infty}(1-\varepsilon_n)/a_n^2<\infty$.
Define 
$$V_n^2=\sum_{k=n}^{\infty}E[d_k^2\mid {\cal F}_{k-1}], \quad 
\sigma_n^2=\sum_{k=n}^{\infty}E[d_k^2].$$
Then \eqref{eq:sim-quad} yields  
$V_n^2\sim c_{\alpha,\rho}t_n$, $P$-a.s.\ 
and 
$\sigma_n^2\sim c_{\alpha,\rho}t_n$ 
so that $V_n^2/\sigma_n^2\rightarrow 1$ as $n\rightarrow\infty$, $P$-a.s.

We first prove \eqref{eq:fluct-clt}. By assumption, $\rho+2\alpha\geq 1$ and 
the sequence $\{(1-\varepsilon_n)/a_n^2\}_{n=1}^{\infty}$ is regularly varying with index $-(\rho+2\alpha)$.  
Then by \cite[Lemma 1.5.9 b]{BGT87}, we have  for some positive constant $c_1$,  
\begin{equation}\label{eq:t-lower}
a_n^2t_n=n(1-\varepsilon_n)\left(\frac{n(1-\varepsilon_n)}{a_n^2}\right)^{-1}
\sum_{k=n}^{\infty}\frac{1-\varepsilon_k^2}{a_k^2}\geq  c_1n(1-\varepsilon_n).
\end{equation}
Since  $|d_n|\leq 2/a_n$ by \eqref{eq:difference}, 
we  obtain by \eqref{eq:t-lower} and Assumption \ref{assum:alpha} (ii), 
\begin{equation}\label{eq:fourth}
\sum_{n=1}^{\infty}\frac{1}{\sigma_n^4}E[d_n^4\mid {\cal F}_{n-1}]
\leq c_2\sum_{n=1}^{\infty}\frac{1}{(a_n^2t_n)^2}
\leq c_3\sum_{n=1}^{\infty}\frac{1}{n^2(1-\varepsilon_n)^2}<\infty.
\end{equation}

Let $K_0=0$ and 
$$K_n=\sum_{k=1}^n \frac{1}{\sigma_k^2}(d_k^2-E[d_k^2\mid {\cal F}_{k-1}]) \quad (n\geq 1). $$
Then $\{K_n\}_{n=0}^{\infty}$ is a martingale. 
Let 
$$\Delta K_n=K_n-K_{n-1}=\frac{d_n^2-E[d_n^2\mid {\cal F}_{n-1}]}{\sigma_n^2}.$$
Since 
$$(\Delta K_n)^2\leq \frac{c_4}{\sigma_n^4}(d_n^4+E[d_n^4\mid{\cal F}_{n-1}]),$$
we have by \eqref{eq:fourth}, 
$$\sum_{n=1}^{\infty}(\Delta K_n)^2
\leq c_5\sum_{n=1}^{\infty}\frac{1}{\sigma_n^4}(d_n^4+E[d_n^4\mid{\cal F}_{n-1}])<\infty.$$
Hence by \cite[Theorem 2.15]{HH80}, the series 
$$\sum_{k=1}^{\infty}\frac{1}{\sigma_k^2}(d_k^2-E[d_k^2 \mid {\cal F}_{k-1}])$$
is almost surely convergent. Then by Kronecker's lemma, 
$$\lim_{n\rightarrow\infty}\frac{1}{\sigma_n^2}\sum_{k=n}^{\infty}(d_k^2-E[d_k^2\mid {\cal F}_{k-1}])=0, \quad \text{$P$-a.s.}$$ 
and thus 
\begin{equation}\label{eq:as-variance}
\frac{1}{\sigma_n^2}\sum_{k=n}^{\infty}d_k^2
=\frac{1}{\sigma_n^2}\sum_{k=n}^{\infty}(d_k^2-E[d_k^2\mid {\cal F}_{k-1}])
+\frac{1}{\sigma_n^2}\sum_{k=n}^{\infty}E[d_k^2\mid {\cal F}_{k-1}]\rightarrow 1, 
\quad \text{$P$-a.s.}
\end{equation}

Since the sequence $\{a_n\}_{n=1}^{\infty}$ is increasing by definition 
and $|d_n|\leq 2/a_n$, we have 
$$\sup_{k\geq n}(d_k^2) \leq \frac{4}{a_n^2}\rightarrow 0  \ (n\rightarrow\infty)$$
so that by \eqref{eq:t-lower},
$$\frac{E[\sup_{k\geq n}(d_k^2)]}{\sigma_n^2}\leq \frac{4}{a_n^2\sigma_n^2}
\leq \frac{c_6}{a_n^2t_n}
\leq \frac{c_7}{n(1-\varepsilon_n)}\rightarrow 0 \ (n\rightarrow\infty).$$ 
Combining this relation with \eqref{eq:as-variance}, we obtain by \cite[Corollary 3.5]{HH80}, 
$$\frac{M_{\infty}-M_n}{\sigma_{n+1}}\rightarrow N(0,1) \quad  \text{in distribution}.$$
As $M_n=(S_n-E[S_n])/a_n$ and $\sigma_n\sim \sqrt{c_{\alpha,\rho}}\sqrt{t_n}$, we have \eqref{eq:fluct-clt}.

We next prove \eqref{eq:fluct-lil}. 
For any $\eta>0$, we have as for \eqref{eq:fourth}, 
$$\sum_{n=1}^{\infty}\frac{1}{\sigma_n}E[|d_n|; |d_n|>\eta \sigma_n]
\leq \frac{1}{\eta^3}\sum_{n=1}^{\infty}\frac{1}{\sigma_n^4}E[d_n^4]<\infty$$
and 
$$\sum_{n=1}^{\infty}\frac{1}{\sigma_n^4}E[d_n^4; |d_n|\leq \eta \sigma_n]
\leq \sum_{n=1}^{\infty}\frac{1}{\sigma_n^4}E[d_n^4]<\infty.$$
By these calculations with \eqref{eq:as-variance}, 
we can apply \cite[Theorem 4.9]{HH80} to show that 
$$\limsup_{n\rightarrow\infty}\pm\frac{M_{\infty}-M_n}{\psi(\sigma_n^2)}=1, \quad \text{$P$-a.s.}$$
This implies \eqref{eq:fluct-lil}. 
\end{proof}

\subsection{Examples}
\begin{exam}\label{exam:clt-1}\rm 
Assume that $\varepsilon\in [0,1)$. Then the Gaussian fluctuation results in \cite{KT19} 
remain true.

\noindent
(i) If $0\leq \alpha<1/2$, then by Theorem \ref{thm:fluct-0},
$$\frac{S_n-E[S_n]}{\sqrt{n}}
\rightarrow N\left(0,\frac{1-\varepsilon^2}{1-2\alpha}\right) \quad \text{in distribution}$$
and 
$$\limsup_{n\rightarrow\infty}\pm \frac{S_n-E[S_n]}{\sqrt{2n\log\log n}}=\sqrt{\frac{1-\varepsilon^2}{1-2\alpha}}, \quad \text{$P$-a.s.}$$

\noindent
(ii) If $1/2<\alpha\leq 1$, then  by Theorem \ref{thm:fluct},
$$\frac{S_n-E[S_n]-a_nM_{\infty}}{\sqrt{n}}\rightarrow N\left(0,\frac{1-\varepsilon^2}{2\alpha-1}\right) \quad \text{in distribution}$$
and 
$$\limsup_{n\rightarrow\infty}
\pm \frac{S_n-E[S_n]-a_nM_{\infty}}{\sqrt{2n\log\log n}}
=\sqrt{\frac{1-\varepsilon^2}{2\alpha-1}}, \quad \text{$P$-a.s.}$$

\noindent
(iii) 
For some constants $\kappa$ and $\theta>0$, let 
$$\alpha_n=\frac{1}{2}+\frac{\kappa}{(\log n)^{\theta}} \quad (n\geq 2).$$
Then by Example \ref{exam:order-l-a-2}, 
if $0<\theta<1$ and $\kappa>0$, or if $\theta=1$ and $\kappa>1/2$, 
then the assertions of  Theorem \ref{thm:fluct-0} (2) and Theorem \ref{thm:fluct} (1) are valid; 
otherwise, that of Theorem \ref{thm:fluct-0} (1) holds.
See Example \ref{exam:order-l-a-2} 
for the calculations of the scaling factors $a_n\sqrt{w_n}$ and $a_n\sqrt{z_n}$ 
in Theorem \ref{thm:fluct-0} (1) and Theorem \ref{thm:fluct} (1).
Note that if $0<\theta<1$, then 
the asymptotic behaviors of $a_n\sqrt{w_n}$ and $a_n\sqrt{z_n}$ are dependent of the value of $\theta$. 
\end{exam}

\begin{exam}\label{exam:clt-2}\rm 
Let  $\varepsilon=1$. We assume that the sequence $\{1-\varepsilon_n\}_{n=1}^{\infty}$ 
is regularly varying of index $-\rho$ for some $\rho\geq 0$.

\noindent
(i) Let $1/2<\alpha\leq 1$. Since $\sum_{n=1}^{\infty}1/a_n^2<\infty$, 
we have the assertion in  Theorem \ref{thm:fluct} (2) with 
\begin{equation}\label{eq:t-asymp}
t_n\sim \frac{1}{2\alpha+\rho-1}\frac{n(1-\varepsilon_n)}{a_n^2}, 
\quad a_n\sqrt{t_n}\sim \sqrt{\frac{n(1-\varepsilon_n)}{2\alpha+\rho-1}}.
\end{equation}

\noindent
(ii) Let $0\leq \alpha<1/2$. Then $\sum_{n=1}^{\infty}1/a_n^2=\infty$. 
If $0<\rho<(1-2\alpha)\wedge (1/2)$, then the assertion of Theorem \ref{thm:quad-var} (1) holds with 
$$v_n\sim \frac{1}{1-(2\alpha+\rho)}\frac{n(1-\varepsilon_n)}{a_n^2}, 
\quad a_n\sqrt{v_n}\sim \sqrt{\frac{n(1-\varepsilon_n)}{1-(2\alpha+\rho)}}.$$
On the other hand, if $1-2\alpha<\rho<1/2$, 
then the assertion of Theorem \ref{thm:fluct} (2) holds with 
\eqref{eq:t-asymp}. 

We here assume that $\rho=1-2\alpha$ and $0<\rho<1/2$, that is, $1/4<\alpha<1/2$. 
Then the scaling factors in the Gaussian fluctuations 
depend on the convergence rates of $\{\varepsilon_n\}_{n=1}^{\infty}$ and $\{\alpha_n\}_{n=1}^{\infty}$. 
For instance, we first take for some constants $\eta$, $\kappa$ and $\theta>0$,
$$\varepsilon_n=1-\frac{(\log n)^{\eta-1}}{n^{1-2\alpha}}, 
\quad \alpha_n=\alpha+\frac{\kappa}{(\log n)^{\theta}} \quad (n\geq 2).$$
We assume one of  the following conditions:
\begin{itemize}
\item  $0<\theta<1$, $\kappa<0$, 
\item $\theta=1$ and $\eta\geq 2\kappa$, 
\item $\theta>1$ or $\kappa=0$, and $\eta\geq 0$.
\end{itemize}
Then the assertion of Theorem \ref{thm:quad-var} (1) holds; 
otherwise, that of Theorem \ref{thm:fluct} (2) holds. 
See Example \ref{exam:order-2} for the calculations of the scaling functions $a_n\sqrt{v_n}$ and $a_n\sqrt{t_n}$.

\noindent
(iii) Let $\alpha=1/2$. 
If $\rho\in (0,1/2)$, then  $\sum_{n=1}^{\infty}(1-\varepsilon_n)/a_n^2<\infty$ 
so that  the assertion of Theorem \ref{thm:fluct} (2) holds. 

If $\rho=0$, 
then the convergence rates of $\{\varepsilon_n\}_{n=1}^{\infty}$ and $\{\alpha_n\}_{n=1}^{\infty}$
affect the scaling factors in the Gaussian fluctuations. 
For instance, we first take for some constants $\eta>0$, $\kappa$ and $\theta>0$,
$$\varepsilon_n=1-\frac{1}{(\log n)^{\eta}}, \quad \alpha_n=\frac{1}{2}+\frac{\kappa}{(\log n)^{\theta}} \quad (n\geq 2).$$
We assume one of  the following conditions:
\begin{itemize}
\item  $0<\theta<1$, $\kappa<0$, 
\item $\theta=1$ and $0<\eta\leq 1-2\kappa$, 
\item $\theta>1$ or $\kappa=0$, and $0<\eta\leq 1$.
\end{itemize}
Then the assertion of Theorem \ref{thm:quad-var} (1)  is valid; 
otherwise, that of Theorem \ref{thm:fluct} (2) is valid. 
See Example \ref{exam:order-1} 
also for the calculations of the scaling factors $a_n\sqrt{v_n}$ and $a_n\sqrt{t_n}$ 
in Theorems \ref{thm:quad-var} (1) and \ref{thm:fluct} (2). 
\end{exam}

\appendix
\section{Appendix}\label{sect:append}
\subsection{Asymptotic properties of series}
The next lemma is  used for the proofs of Lemma \ref{lem:series} and Theorem \ref{thm:conv-limit}. 
\begin{lem}\label{lem:series-order}
Let $\{c_n\}_{n=1}^{\infty}$ be a nonnegative sequence such that 
$c_n\rightarrow 0$ as $n\rightarrow\infty$ and $\sum_{n=1}^{\infty}c_n=\infty$. 
Then 
\begin{equation}\label{eq:series-order-1}
\sum_{k=1}^nc_k\left(\sum_{l=1}^kc_l\right)=\sum_{k=1}^nc_k\left(\sum_{l=k}^nc_l\right)
\sim\frac{1}{2}\left(\sum_{k=1}^nc_k\right)^2
\end{equation}
and 
\begin{equation}\label{eq:series-order-2}
\sum_{k=1}^n c_k\exp\left(\sum_{l=1}^kc_l\right)\sim \exp\left(\sum_{k=1}^nc_k\right).
\end{equation}
\end{lem}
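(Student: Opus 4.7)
My plan is to prove the two assertions separately, each by a short elementary argument that uses only the hypotheses $c_n \to 0$ and divergence of $S_n := \sum_{k=1}^n c_k$ to infinity.

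For \eqref{eq:series-order-1}, I would first establish the stated equality by symmetry: the left-hand side sums $c_k c_l$ over pairs with $l \le k$, while the middle expression sums the same product over pairs with $l \ge k$, and renaming indices shows the two are equal. Writing $A_n$ for this common value, adding the two representations gives
$$
2 A_n = \sum_{k,l=1}^n c_k c_l + \sum_{k=1}^n c_k^2 = S_n^2 + \sum_{k=1}^n c_k^2.
$$
It then suffices to check that $\sum_{k=1}^n c_k^2 = o(S_n^2)$. Given $\varepsilon > 0$, pick $N$ with $c_k \le \varepsilon$ for $k \ge N$; then $\sum_{k=1}^n c_k^2 \le C + \varepsilon S_n$, so $\sum_{k=1}^n c_k^2 / S_n^2 \to 0$ because $S_n \to \infty$. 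Hence $A_n \sim \frac{1}{2} S_n^2$.

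For \eqref{eq:series-order-2}, I would use the telescoping identity
$$
e^{S_n} - 1 = \sum_{k=1}^n \bigl( e^{S_k} - e^{S_{k-1}} \bigr) = \sum_{k=1}^n c_k e^{S_k} \cdot \frac{1 - e^{-c_k}}{c_k},
$$
where the ratio is interpreted as $1$ whenever $c_k = 0$. Since $c_k \to 0$, we have $(1 - e^{-c_k})/c_k \to 1$, so for every $\varepsilon > 0$ there exists $N$ such that for all $n \ge N$,
$$
(1 - \varepsilon) \sum_{k=N}^n c_k e^{S_k} \le e^{S_n} - e^{S_{N-1}} \le (1 + \varepsilon) \sum_{k=N}^n c_k e^{S_k}.
$$
Dividing through by $e^{S_n}$, which tends to infinity, and then letting $\varepsilon \to 0$, I would conclude $\sum_{k=N}^n c_k e^{S_k} \sim e^{S_n}$, and the bounded initial tail $\sum_{k < N} c_k e^{S_k}$ is negligible, giving the desired asymptotic.

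Neither step presents a serious obstacle, since both ultimately reduce to the familiar principle that when the $c_k$ become small, increments of a smooth function of the partial sums $S_k$ are well approximated by linearization. The only subtlety to be careful about is absorbing the negligible remainders (the term $\sum_k c_k^2$ in the first part and the correction $e^{S_{N-1}}$ together with the truncated initial segment in the second), which in both cases is handled cleanly by the divergence $S_n \to \infty$.
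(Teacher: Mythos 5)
Your proof is correct and follows essentially the same route as the paper: the symmetry identity plus $\sum_k c_k^2 = o\bigl((\sum_k c_k)^2\bigr)$ for \eqref{eq:series-order-1}, and a telescoping comparison of $e^{S_k}-e^{S_{k-1}}$ with $c_k e^{S_k}$ for \eqref{eq:series-order-2}. The paper phrases the latter as the sandwich $\int_{T_{k-1}}^{T_k}e^x\,\mathrm{d}x\leq (T_k-T_{k-1})e^{T_k}\leq e^{c_k}\int_{T_{k-1}}^{T_k}e^x\,\mathrm{d}x$, which is the same estimate as your factor $(1-e^{-c_k})/c_k\to 1$.
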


\begin{proof}
Since 
$$\sum_{k=1}^nc_k\left(\sum_{l=k}^nc_l\right)=\sum_{l=1}^nc_l\left(\sum_{k=1}^lc_k\right)
=\sum_{k=1}^nc_k\left(\sum_{l=1}^kc_l\right),$$
we have
$$\sum_{k=1}^nc_k\left(\sum_{l=k}^nc_l\right)
=\frac{1}{2}\left(\sum_{k=1}^nc_k\right)^2+\frac{1}{2}\sum_{k=1}^nc_k^2.$$
As $c_n\rightarrow 0 \ (n\rightarrow\infty)$, we obtain the first assertion.

Let $T_0=0$ and $T_n=\sum_{k=1}^nc_k \ (n\geq 1)$. Then 
$$\sum_{k=1}^n c_k\exp\left(\sum_{l=1}^kc_l\right)=\sum_{k=1}^n (T_{k}-T_{k-1})e^{T_k}$$
and 
$$\int_{T_{k-1}}^{T_k}e^x\,{\rm d}x\leq (T_{k}-T_{k-1})e^{T_k}\leq e^{c_k}\int_{T_{k-1}}^{T_k}e^x\,{\rm d}x.$$
As $c_n\rightarrow 0 \ (n\rightarrow\infty)$, we get the second assertion 
by elementary calculus.
\end{proof}

\subsection{Examples}
We present  the asymptotic behaviors of sequences 
related to the examples in Sections \ref{sect:lln} and \ref{sect:gauss}.

\begin{exam}\label{exam:order-l-a-1}\rm 
For some constants $\alpha>0$, $\kappa$ and $\theta>0$, let  
$$\alpha_n=\alpha+\frac{\kappa}{(\log n)^{\theta}} \quad (n\geq 2).$$
Let $C_*$ be the same constant as in Lemma \ref{lem:series} (4). 
If $0<\theta<1$, then the limit 
$$\lim_{n\rightarrow\infty}
\left(\sum_{k=2}^{n-1}\frac{1}{k(\log k)^{\theta}}
-\int_1^n\frac{1}{x(\log x)^{\theta}}\,{\rm d}x\right)=\lim_{n\rightarrow\infty}
\left(\sum_{k=2}^{n-1}\frac{1}{k(\log k)^{\theta}}
-\frac{(\log n)^{1-\theta}}{1-\theta}\right)$$ 
exists and takes a positive value so that 
$$\rho_n
\sim c_1\exp\left(\frac{\kappa}{1-\theta}(\log n)^{1-\theta}\right), 
\quad a_n \sim 
\frac{c_1C_*}{\Gamma(\alpha+1)}n^{\alpha}\exp\left(\frac{\kappa}{1-\theta}(\log n)^{1-\theta}\right).$$
For $\theta=1$, the same argument above applies and thus 
$$\rho_n\sim c_2(\log n)^{\kappa}, 
\quad a_n \sim \frac{c_2C_*}{\Gamma(\alpha+1)}n^{\alpha}(\log n)^{\kappa}.$$
For $\theta>1$, the sequence $\{\rho_n\}_{n=1}^{\infty}$ is convergent and 
$a_n\sim (\rho_{\infty}C_*/\Gamma(\alpha+1)) n^{\alpha}$ 
with $\rho_{\infty}=\lim_{n\rightarrow\infty}\rho_n$.

According to the calculations above, we see that if $\alpha=1/2$, then 
$$
\sum_{n=1}^{\infty}\frac{1}{a_n^2}<\infty 
\iff \text{$0<\theta<1$  and $\kappa>0$, or $\theta=1$ and $\displaystyle \kappa>\frac{1}{2}$}.
$$
On the other hand, since $\{a_n\}_{n=1}^{\infty}$ is a regularly varying sequence with index $-\alpha$, 
we have $\sum_{n=1}^{\infty}1/a_n^2=\infty$ for $\alpha\in (0,1/2)$, 
and $\sum_{n=1}^{\infty}1/a_n^2<\infty$ for $\alpha>1/2$.
\end{exam}

\begin{exam}\label{exam:order-l-a-2}\rm 
For some constants $\kappa$ and $\theta>0$, let 
$$\alpha_n=\frac{1}{2}+\frac{\kappa}{(\log n)^{\theta}} \quad (n\geq 2).$$
Let $w_n=\sum_{k=1}^n 1/a_k^2$ and $z_n=\sum_{k=n}^{\infty}1/a_k^2$. 
By the calculations in Example \ref{exam:order-l-a-1}, 
we have the following: 
Assume first that  $0<\theta<1$. 
If $\kappa<0$,  then  
$$w_n\sim 
\frac{c_1}{-2\kappa}(\log n)^{\theta}\exp\left(\frac{-2\kappa}{1-\theta}(\log n)^{1-\theta}\right), 
\quad a_n\sqrt{w_n}\sim \frac{1}{\sqrt{-2\kappa}}\sqrt{n(\log n)^{\theta}}.
$$
If $\kappa=0$, then  
$$w_n\sim c_2\log n, \quad
a_n\sqrt{w_n}\sim \sqrt{n\log n}.
$$
If $\kappa>0$, then 
$$z_n\sim \frac{c_3}{2\kappa}(\log n)^{\theta}\exp\left(-\frac{2\kappa}{1-\theta}(\log n)^{1-\theta}\right), \quad 
a_n\sqrt{z_n}\sim 
\sqrt{\frac{n(\log n)^{1-\theta}}{2\kappa}}.$$

Assume next that  $\theta=1$. 
If $\kappa<1/2$,  then  
$$w_n\sim 
\frac{c_4}{1-2\kappa}(\log n)^{1-2\kappa}, 
\quad a_n\sqrt{w_n}\sim \sqrt{\frac{n\log n}{1-2\kappa}}.
$$
If $\kappa=1/2$, then  
$$w_n\sim c_5\log\log n, \quad
a_n\sqrt{w_n}\sim \sqrt{n\log n \log\log n}.
$$
If $\kappa>1/2$, then 
$$
z_n\sim 
\frac{c_6}{2\kappa-1}\frac{1}{(\log n)^{2\kappa-1}}, \quad 
a_n\sqrt{z_n}\sim 
\sqrt{\frac{n\log n}{2\kappa-1}}.
$$

We finally assume that $\theta>1$. 
Then 
$$w_n\sim c_7\log n, \quad a_n\sqrt{w_n}\sim \sqrt{n\log n}.$$
\end{exam}

\begin{exam}\label{exam:order-2}\rm 
Let $0<\alpha<1/2$. For some constants $\eta$, $\kappa$ and $\theta>0$, let 
$$\varepsilon_n=1-\frac{(\log n)^{\eta-1}}{n^{1-2\alpha}}, \quad \alpha_n=\alpha+\frac{\kappa}{(\log n)^{\theta}}.$$ 

We first assume that $\theta=1$ or $\kappa=0$. If $\eta>2\kappa$, then 
$$v_n\sim \frac{c_1}{\eta-2\kappa}(\log n)^{\eta-2\kappa}, 
\quad a_n\sqrt{v_n}\sim \sqrt{\frac{n^{2\alpha}(\log n)^{\eta}}{\eta-2\kappa}}.$$
If $\eta=2\kappa$, then 
$$v_n\sim c_2\log \log n, \quad a_n\sqrt{v_n}\sim \sqrt{n^{2\alpha}\log\log n}. $$
If $\eta<2\kappa$, then 
$$t_n\sim \frac{c_3}{(2\kappa-\eta)(\log n)^{2\kappa-\eta}}, 
\quad a_n\sqrt{v_n}\sim \sqrt{\frac{n^{2\alpha}(\log n)^{\eta}}{2\kappa-\eta}}.$$
We next assume that $\theta>1$.  
Independently of the value of $\kappa$, 
the calculations above for $\theta=1$ remain valid by taking formally $\kappa=0$. 

We finally assume that $0<\theta<1$. If $\kappa>0$, then 
$$t_n\sim \frac{c_4(\log n)^{\eta+\theta}}{2\kappa}\exp\left(-\frac{2\kappa(\log n)^{1-\theta}}{1-\theta}\right),
\quad a_n\sqrt{t_n}\sim \sqrt{\frac{n^{2\alpha}(\log n)^{\eta+\theta}}{2\kappa}}.$$
If $\kappa<0$, then 
$$v_n\sim \frac{c_5(\log n)^{\eta+\theta}}{-2\kappa}\exp\left(\frac{-2\kappa(\log n)^{1-\theta}}{1-\theta}\right), 
\quad a_n\sqrt{v_n}\sim \sqrt{\frac{n^{2\alpha}(\log n)^{\eta+\theta}}{-2\kappa}}.$$
\end{exam}

\begin{exam}\label{exam:order-1}\rm 
For some constants $\eta>0$, $\kappa$ and $\theta>0$, let 
$$\varepsilon_n=1-\frac{1}{(\log n)^{\eta}}, \quad \alpha_n=\frac{1}{2}+\frac{\kappa}{(\log n)^{\theta}} \quad (n\geq 2).$$

We first assume that $\theta=1$ or $\kappa=0$. 
If $0<\eta<1-2\kappa$, then 
$$v_n\sim \frac{2c_1(\log n)^{1-(\eta+2\kappa)}}{1-(\eta+2\kappa)}, \quad 
a_n\sqrt{v_n}\sim
 \sqrt{\frac{2n(\log n)^{1-\eta}}{1-(\eta+2\kappa)}}.$$
 If $\eta=1-2\kappa$, then 
$$v_n\sim 2c_2\log\log n, \quad 
a_n\sqrt{v_n}\sim
\sqrt{2n(\log n)^{1-\eta}(\log\log n)}.$$
If $\eta>1-2\kappa$, then 
$$t_n\sim \frac{2c_3}{(\eta+2\kappa-1)(\log n)^{\eta+2\kappa-1}}, \quad 
a_n\sqrt{t_n}\sim \sqrt{\frac{2}{(\eta+2\kappa-1)n(\log n)^{\eta-1}}}.$$
We next assume that $\theta>1$. 
Then independently of the value of $\kappa$, the calculations above for $\theta=1$ remain valid by taking formally $\kappa=0$.

We finally assume that $0<\theta<1$. If $\kappa>0$, then 
$$t_n\sim \frac{c_4}{2\kappa} (\log n)^{\theta-\eta}\exp\left(-\frac{2\kappa}{1-\theta}(\log n)^{1-\theta}\right), 
\quad a_n\sqrt{t_n}\sim\sqrt{\frac{n(\log n)^{\theta-\eta}}{2\kappa}}.$$
If $\kappa<0$, then 
$$v_n\sim \frac{c_5}{-2\kappa}(\log n)^{\theta-\eta}\exp\left(\frac{-2\kappa}{1-\theta}(\log n)^{1-\theta}\right), 
\quad a_n\sqrt{v_n}\sim 
\sqrt{\frac{n(\log n)^{\theta-\eta}}{-2\kappa}}.$$
\end{exam}

\subsection*{Acknowledgments}
The author would like to thank Masato Takei 
for valuable discussions and helpful comments, 
which lead to the improvement of Assumption \ref{assum:alpha} and 
Lemma \ref{lem:lln-order}. 
He also kindly informed the author of the reference \cite{K09}.

\end{document}